\documentclass[12pt,a4paper]{amsart}

\usepackage{upgreek}
\usepackage{eurosym}
\usepackage{amssymb}
\usepackage{amsfonts}
\usepackage{graphicx}
\usepackage{amsmath}

\setcounter{MaxMatrixCols}{10}
\usepackage[latin1]{inputenc}
\usepackage{color}
\usepackage{latexsym}
\usepackage{amssymb}
\usepackage{mathrsfs}
\usepackage{enumerate}
\usepackage{amsthm}

\textwidth=16.5cm
\textheight=23cm
\parindent=16pt
\oddsidemargin=0.5cm
\evensidemargin=-0.1cm
\topmargin=-0.5cm

\begin{document}
\newtheorem{definition}{Definition}[section]
\newtheorem{example}{Example}[section]
\newtheorem{theorem}{Theorem}[section]
\newtheorem{proposition}{Proposition}[section]
\newtheorem{corollary}{Corollary}[section]
\newtheorem{lemma}{Lemma}[section]
\newtheorem{remark}{Remark}[section]
\newcommand{\coua}{{\hfill\raisebox{1mm}{\framebox[2mm]{}}}}
\newcommand{\ii}\'{\i}
\newcommand{\lid}{\lim_{n\to\infty}}
\newcommand{\parhf}{\par\hfill}
\newcommand{\parni}{\par\noindent}
\newcommand{\cuad}{\marginpar
{\hspace{-8mm}\raisebox{1mm}{\framebox[4mm]{}}}}

\title{Soft convex structures}
\author[Jos\'e Sanabria, Adolfo Pimienta, Semiramis Zambrano]{Jos\'e Sanabria$^1$$^{*}$, Adolfo Pimienta$^2$ and Semiramis Zambrano$^3$}

\address{$^{1}$ Departamento de Matem\'{a}ticas, Facultad de Educaci\'on y Ciencias, Universidad de Sucre, Sincelejo, Colombia.}
\email{jesanabri@gmail.com (Jos\'e E. Sanabria)\newline https://orcid.org/0000-0002-9749-4099}

\address{$^{2}$ Facultad de Ciencias B\'asicas y Biom\'edicas, Universidad Sim\'on Bol\'ivar, Barranquilla, Colombia.}
\email{pimienta331@gmail.com}

\address{$^{3}$  Maestr\'ia en Ciencias Matem\'{a}ticas, Facultad de Ciencias B\'asicas, Universidad del Atl\'antico, Barranquilla, Colombia.}
\email{semy.zambrano@gmail.com}

\date{Received: xxxxxx; Revised: yyyyyy; Accepted: zzzzzz.
\newline \indent $^{*}$Corresponding author}

\begin{abstract}
In this manuscript the idea of soft convex structures is given and some of their properties are investigated. Also, soft convex sets, soft concave sets and soft convex hull operator are defined and their properties are studied. Moreover, the concepts of soft convexly derived operator and soft convex base are studied and their relationship to convex structures are explored.
\end{abstract}

\makeatletter
\@namedef{subjclassname@2020}{%
  \textup{2020} Mathematics Subject Classification}
\makeatother
\keywords{Soft sets, soft convex spaces, soft convex sets, soft convex hull operator, soft convexly derived operator, soft convex base}
\subjclass[2020]{Primary 52A01, 03E72; Secondary 52A99, 08A72}

\maketitle

\section{Preliminaries}
In this section, we present the basic preliminaries strictly necessary for the development of this manuscript; such as the notions of soft set, operations between soft sets and soft function. The concepts given are well known in the literature, therefore, certain citations are opportunely given with respect to the bibliographical references where the notions and results presented here can be investigated. In the development of this paper, $\mathcal{P}(X)$ will denote the power set of a nonempty set $X$.
\begin{definition}\cite{6A}
Let $X$ be an initial universe and $E$ be a non-empty set of parameters. A \emph{soft set} on $X$ is a pair $(\Omega,E)$, where $\Omega:E\to \mathcal{P}(X)$ is a function. In this case, the pair $(\Omega,E)$ is also denoted by $\Omega_{E}$.
\end{definition}
In other words, a soft set on $X$ is a parameterized family of subsets of the universe $X$. For $e\in E$, $\Omega(e)$ can be considered as the set of $e$-approximate elements of the soft set $\Omega_{E}$. The family of all soft sets on $X$ is represented by $\mathcal{S}_{E}(X)$. If $\Omega(e)$ is a finite set for all $e\in E$, then $\Omega_{E}$ is said to be a \textbf{finite soft set}. The family of all finite soft sets on $X$ is denoted by $\mathcal{FS}_{E}(X)$. If $\Omega_{E}$ is not a finite soft set, then $\Omega_{E}$ is said to be a \textbf{non-finite soft set}. Note that $\Omega_{E}$ is a non-finite soft set, if there exists a parameter $e\in E$ such that $\Omega(e)$ is an infinite set.

\begin{definition}\cite{8A}
For $\Omega_{E}, O_{E}\in \mathcal{S}_{E}(X)$, we say that $\Omega_{E}$ is a \textbf{soft subset} of $O_{E}$, if for all $e\in E$, $\Omega(e) \subseteq O(e)$. On the other hand, $\Omega_{E}$ is said to be a \textbf{soft superset} of $O_{E}$, if $O_{E}$ is a soft subset of $\Omega_{E}$. If $\Omega_{E}$ is a soft subset of $O_{E}$, then it is denoted by $\Omega_{E}\widetilde{\subset} O_{E}$.
\end{definition}

\begin{definition}\cite{6A}
Let $\Omega_{E}, O_{E}\in\mathcal{S}_{E}(X)$. We say that $\Omega_{E}$ and $O_{E}$ are \textbf{equal}, if $\Omega_{E}$ is a soft subset of $O_{E}$ and $O_{E}$ is a soft subset of $\Omega_{E}$.
\end{definition}

\begin{definition}\cite{17A}
The \textbf{complement} of a soft set $\Omega_{E}\in \mathcal{S}_{E}(X)$ is denoted by $\Omega_{E}^{c}= (\Omega^{c},E)$, where $\Omega^{c}:E \to \mathcal{P}(X)$ is a function given by $\Omega^{c}(e)=X-\Omega(e) $, for all $e\in E$.
\end{definition}

\begin{definition}\cite{8A}
A soft set $\Omega_{E}\in \mathcal{S}_{E}(X)$ is called a \textbf{null soft set}, denoted by $\Phi_{E}$, if for all $e\in E$, $\Omega(e)=\emptyset$. A soft set that is not null is said to be a \textbf{non-null soft set}.
\end{definition}
Note that for a non-null soft set $\Omega_{E}$, $\Omega(e)\neq \emptyset$ for some $e \in E $.

\begin{definition}\cite{8A}.
A soft set $\Omega_{E}\in \mathcal{S}_{E}(X)$ is called an \textbf{absolute soft set}, denoted by $X_{E}$, if for all $e\in E$, $\Omega(e)=X$.
\end{definition}

\begin{definition}\cite{8A}
The \textbf{union of two soft sets} $\Omega_{E}, O_{E}\in \mathcal{S}_{E}(X)$ is the soft set $\Theta_E=(\Theta,E)$, where for all $e\in E$, $\Theta(e)=\Omega(e) \cup O(e)$. In this case, we write $\Theta_{E}= \Omega_{E}\widetilde{\cup} O_{E}$.
\end{definition}

\begin{definition}\cite{8A}\label{D2.8}
The \textbf{intersection of two soft sets} $\Omega_{E}, O_{E}\in \mathcal{S}_{E}(X)$ is the soft set $\Theta_E=(\Theta,E)$, where for all $e\in E$, $\Theta(e)=\Omega(e) \cap O(e)$. In this case, we write $\Theta_{E}= \Omega_{E}\widetilde{\cap} O_{E}$.
\end{definition}

\begin{definition}\cite{17A}
The \textbf{difference of two soft sets} $\Omega_{E}, O_{E}\in \mathcal{S}_{E}(X)$ is the soft set denoted by $\Omega_{E}\setminus O_{E}=(\Theta,E)$, where for all $e\in E$, $\Theta(e)=\Omega(e)\setminus O(e)$.
\end{definition}

\begin{definition}\cite{8A}
The \textbf{union of a family} $\left\{\left(\Omega_{i}, E\right): i \in I\right\}$ of soft sets on $X$, denoted by $\displaystyle\widetilde{\bigcup}_{i \in I}\left(\Omega_{i}, E\right)$, is the soft set $(\Omega, E)$, where for all $e \in E$, $\Omega(e)=\displaystyle\bigcup_{i \in I} \Omega_{i}(e)$.
\end{definition}

\begin{definition}\cite{8A}
The \textbf{intersection of a family} $\left\{\left(\Omega_{i}, E\right): i \in I\right\}$ of soft sets on $X$, denoted by $\displaystyle\widetilde{\bigcap}_{i \in I}\left(\Omega_{i}, E\right)$, is the soft set $(\Omega, E)$, where for all $e \in E$, $\Omega(e)=\displaystyle\bigcap_{i \in I} \Omega_{i}(e)$.
\end{definition}

\begin{definition}\cite{26A,27A}
A \textbf{soft function} between $\mathcal{S}_{E}\left(X\right)$ and $\mathcal{S}_{E}\left(Y\right)$ is a function $f:X\rightarrow Y$ such that the image of $(\Omega, E) \in\mathcal{S}_{E}\left(X\right)$ and the preimage of $(\Theta, E) \in \mathcal{S}_{E}\left(Y\right)$ are defined by:
\begin{enumerate}[\upshape(1)]
\item $f(\Omega, E)=(f(\Omega), E)$, where $[f(\Omega)](e)=f(\Omega(e))=\{f(x): x\in\Omega(e)\}$, $\forall e \in E$;
\item $f^{-1}(\Theta, E)=\left(f^{-1}(\Theta), E\right)$, where $\left[f^{-1}(\Theta)\right](e)=f^{-1}(\Theta(e))=\{x: f(x)\in\Theta(e)\}$, $\forall e \in E$.
\end{enumerate}
\end{definition}

\section{Soft convex structures and soft convex spaces}
In this section, we introduce and study the notions of soft convex structures and soft convex spaces. Next, we establish some properties related to these notions.
\begin{definition}
A family $\left\{\left(\Omega_{i}, E\right): i \in I\right\}$ of soft sets on $X$ is said to be:
\begin{enumerate}[\upshape(1)]
\item \textbf{Soft downward directed}, if for each pair $i_{1}, i_{2}\in I$, there exists $i_{3}\in I$ such that $\Omega_{i_{3}}\widetilde{\subseteq} \Omega_{i_{1}}$  and $\Omega_{i_{3}}\widetilde{\subseteq} \Omega_{i_{2}}$.
\item \textbf{Soft upward directed}, if for each pair $i_{1}, i_{2}\in I$, there exists $i_{3}\in I$ such that $\Omega_{i_{1}}\widetilde{\subseteq} \Omega_{i_{3}}$  and $\Omega_{i_{2}}\widetilde{\subseteq} \Omega_{i_{3}}$.
\end{enumerate}
\end{definition}

\begin{proposition}\label{P3.1}
Let $\left\{\left(\Omega_{i}, E\right): i \in I\right\}$ be a family of soft sets on $X$. Then, the following properties are equivalent:
\begin{enumerate}[\upshape(1)]
\item $\left\{\left(\Omega_{i}, E\right): i \in I\right\}$ is  soft upward directed.
\item $\left\{\left(\Omega_{i}^{c}, E\right): i \in I\right\}$ is  soft downward directed.
\item $\left\{\Omega_{i}(e): i \in I\right\}$ is  an upward directed family of subsets of $X$, for all $e\in E$.
\item $\left\{\Omega^{c}_{i}(e): i \in I\right\}$ is  a downward directed family of subsets of $X$, for all $e\in E$.
\end{enumerate}
\end{proposition}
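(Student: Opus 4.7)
The plan is to prove the chain $(1)\Leftrightarrow(3)\Leftrightarrow(4)\Leftrightarrow(2)$, exploiting the fact that both the soft inclusion $\widetilde{\subseteq}$ and the soft complement are defined pointwise on the parameter set $E$, so everything reduces to the corresponding classical statement about subsets of $X$.

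First I would establish $(1)\Leftrightarrow(3)$. By the definition of soft subset, $\Omega_{i_{1}}\widetilde{\subseteq}\Omega_{i_{3}}$ and $\Omega_{i_{2}}\widetilde{\subseteq}\Omega_{i_{3}}$ hold exactly when $\Omega_{i_{1}}(e)\subseteq\Omega_{i_{3}}(e)$ and $\Omega_{i_{2}}(e)\subseteq\Omega_{i_{3}}(e)$ for every $e\in E$. Thus an index $i_{3}$ witnessing the soft upward directedness for the pair $i_{1},i_{2}$ is the same index that witnesses the classical upward directedness of $\{\Omega_{i}(e):i\in I\}$ for \emph{every} $e\in E$ simultaneously, and vice versa. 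This is the one place that deserves a brief comment: the quantifier ``for all $e\in E$'' in (3) comes for free because a single $i_{3}$ works uniformly in $e$ once it works in the soft sense.

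Next I would show $(3)\Leftrightarrow(4)$ by taking complements in $X$. For subsets $A,B\subseteq X$, $A\subseteq B$ is equivalent to $B^{c}\subseteq A^{c}$, so given a witness $i_{3}$ for upward directedness of $\{\Omega_{i}(e):i\in I\}$ against the pair $i_{1},i_{2}$, the same $i_{3}$ satisfies $\Omega_{i_{3}}^{c}(e)\subseteq\Omega_{i_{1}}^{c}(e)$ and $\Omega_{i_{3}}^{c}(e)\subseteq\Omega_{i_{2}}^{c}(e)$, showing $\{\Omega_{i}^{c}(e):i\in I\}$ is downward directed; the converse is the same argument applied to $\Omega_{i}=(\Omega_{i}^{c})^{c}$. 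Finally $(4)\Leftrightarrow(2)$ is the exact analogue of $(1)\Leftrightarrow(3)$ applied to the family $\{(\Omega_{i}^{c},E):i\in I\}$, using that soft inclusion between soft complements is defined pointwise by the complement function $\Omega^{c}(e)=X-\Omega(e)$.

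I do not expect a serious obstacle; the proof is essentially bookkeeping. The only subtle point worth making explicit is the uniformity of the witness index $i_{3}$ across all parameters $e\in E$, which is precisely what allows the equivalences between the soft statements (1), (2) and the pointwise statements (3), (4) to go through without any additional hypothesis on $E$ or on the family.
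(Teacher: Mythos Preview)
Your argument for $(1)\Rightarrow(3)$ is fine, but the converse $(3)\Rightarrow(1)$ has a real quantifier gap that you flag and then leave unresolved. Statement~(3) says that for each fixed $e\in E$ and each pair $i_{1},i_{2}\in I$ there exists some $i_{3}\in I$ with $\Omega_{i_{1}}(e)\subseteq\Omega_{i_{3}}(e)$ and $\Omega_{i_{2}}(e)\subseteq\Omega_{i_{3}}(e)$; nothing prevents this witness $i_{3}$ from depending on $e$. Statement~(1) demands a \emph{single} $i_{3}$ that works for all $e$ simultaneously. Your biconditional ``$i_{3}$ witnesses soft upward directedness iff $i_{3}$ witnesses classical upward directedness for every $e$ simultaneously'' is correct as a statement about a fixed $i_{3}$, but it does not let you pass from the $e$-by-$e$ witnesses supplied by (3) to the uniform witness required by (1). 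Concretely, take $X=\{a,b\}$, $E=\{e_{1},e_{2}\}$, $I=\{1,2\}$ with $\Omega_{1}(e_{1})=\{a\}$, $\Omega_{1}(e_{2})=X$, $\Omega_{2}(e_{1})=X$, $\Omega_{2}(e_{2})=\{a\}$. Each family $\{\Omega_{i}(e_{j}):i\in I\}$ is upward directed (choose $i_{3}=2$ for $e_{1}$ and $i_{3}=1$ for $e_{2}$), yet neither $\Omega_{1}\widetilde{\subseteq}\Omega_{2}$ nor $\Omega_{2}\widetilde{\subseteq}\Omega_{1}$ holds, so the soft family is not upward directed. The same obstruction blocks $(4)\Rightarrow(2)$.

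The paper records only ``Straightforward'' as its proof, so there is no alternative argument to compare with; but the counterexample above shows that $(3)\Rightarrow(1)$ actually fails as stated, so no amount of bookkeeping will close the gap. The implications $(1)\Rightarrow(3)$, $(3)\Leftrightarrow(4)$ and $(1)\Leftrightarrow(2)$ are indeed routine along the lines you describe; it is only the passage from the pointwise conditions back to the soft ones that breaks down. Your instinct that the uniformity of $i_{3}$ is ``the only subtle point'' is exactly right---it is subtle enough to be false.
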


\begin{proof}
Straightforward.
\end{proof}

\begin{definition}\label{D3.2}
A family $\zeta\subseteq \mathcal{S}_{E}(X)$ is called a \textbf{soft convex structure} on $X$ if $\zeta$ satisfies the following conditions:
\begin{enumerate}[\upshape(1)]
\item $\Phi_{E}$, $X_{E}$ belong to $\zeta$.
\item The intersection of each family of members of $\zeta$ belongs to $\zeta$.
\item The union of each soft upward directed family of members of $\zeta$ belongs to $\zeta$.
\end{enumerate}
\end{definition}

The triplet $(X,\zeta, E)$ is called a \textbf{soft convex space}.

\begin{definition}\label{D3.3}
Let $(X,\zeta, E)$ be a soft convex space. The members of $\zeta$ are called \textbf{soft convex sets} on $X$. A soft set $\Omega_{E}$ on $X$ is said to be a \textbf{soft concave set} on $X$, if $\Omega^{c}_{E}$ belongs to $\zeta$.
\end{definition}

\begin{proposition}\label{P3.2}
Let $(X,\zeta, E)$ be a soft convex space. Then, the following properties hold:
\begin{enumerate}[\upshape(1)]
\item $\Phi_{E}$, $X_{E}$ are soft concave sets on $X$.
\item The union of each family of soft concave sets on $X$ is a soft concave set on $X$.
\item The union of every soft downward directed family of soft concave sets on $X$ is a soft concave set on $X$.
\end{enumerate}
\end{proposition}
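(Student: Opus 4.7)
The plan is to exploit the duality between soft convex and soft concave sets via complementation and de Morgan's law, so that each of the three assertions reduces to the corresponding axiom in Definition \ref{D3.2}. Recall that $\Omega_{E}$ is soft concave by definition iff $\Omega_{E}^{c}\in\zeta$, and that soft complementation, union, and intersection are all pointwise in the parameter $e$, so the standard set-theoretic de Morgan identity applied at every $e$ gives $\bigl(\widetilde{\bigcup}_{i\in I}\Omega_{i}\bigr)^{c}=\widetilde{\bigcap}_{i\in I}\Omega_{i}^{c}$ and, symmetrically, $\bigl(\widetilde{\bigcap}_{i\in I}\Omega_{i}\bigr)^{c}=\widetilde{\bigcup}_{i\in I}\Omega_{i}^{c}$.

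For (1) I would just compute the two complements: $\Phi_{E}^{c}=X_{E}$ and $X_{E}^{c}=\Phi_{E}$, and both lie in $\zeta$ by Definition \ref{D3.2}(1). Hence both $\Phi_{E}$ and $X_{E}$ meet Definition \ref{D3.3}, so they are soft concave.

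For (2), given an arbitrary family $\{(\Omega_{i},E):i\in I\}$ of soft concave sets, each $\Omega_{i}^{c}$ belongs to $\zeta$. By the de Morgan identity above, $\bigl(\widetilde{\bigcup}_{i\in I}\Omega_{i}\bigr)^{c}=\widetilde{\bigcap}_{i\in I}\Omega_{i}^{c}$. Definition \ref{D3.2}(2) says $\zeta$ is closed under arbitrary intersections, so this intersection lies in $\zeta$. Therefore $\widetilde{\bigcup}_{i\in I}\Omega_{i}$ is soft concave.

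Part (3) is immediate from (2) as written, since a soft downward directed family is a family. If the intended reading is instead the dual statement (namely, that the intersection of every soft downward directed family of soft concave sets is soft concave), the argument would be: by Proposition \ref{P3.1}, $\{(\Omega_{i}^{c},E):i\in I\}$ is soft upward directed in $\zeta$, so Definition \ref{D3.2}(3) gives $\widetilde{\bigcup}_{i\in I}\Omega_{i}^{c}\in\zeta$; then de Morgan rewrites this as $\bigl(\widetilde{\bigcap}_{i\in I}\Omega_{i}\bigr)^{c}$, so $\widetilde{\bigcap}_{i\in I}\Omega_{i}$ is soft concave. There is no real obstacle here; the only thing to be careful about is the parameterwise verification of de Morgan on soft sets, which is routine.
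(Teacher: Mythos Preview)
Your argument is correct and is exactly the duality-by-complementation that the paper has in mind; its own proof simply reads ``Follows from Definitions \ref{D3.2} and \ref{D3.3} together with Proposition \ref{P3.1}'', and your write-up just unpacks those references. Your remark that (3) as stated is a trivial special case of (2), and that the invocation of Proposition \ref{P3.1} only becomes relevant if one reads (3) as the dual statement about intersections, is a valid observation about the formulation.
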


\begin{proof}
Follows from Definitions \ref{D3.2} and \ref{D3.3} together with Proposition \ref{P3.1}.
\end{proof}

\begin{proposition}\label{P3.2}
Let $(X, \zeta, E)$ be a soft convex space. Then, the family
$\zeta(e)=\{\Omega(e): (\Omega,E)\in \zeta\}$ for all $e \in E$, defines a convex structure on $X$  called the \textbf{$e$-parameter convex structure} (or \textbf{crisp convex structure}).
\end{proposition}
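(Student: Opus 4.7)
The plan is to verify the three axioms of a classical convex structure for $\zeta(e)$ in turn, pulling each one back from Definition \ref{D3.2} via evaluation at the parameter $e$.

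Axiom (1) is immediate: since $\Phi_E, X_E \in \zeta$, their $e$-values $\emptyset$ and $X$ lie in $\zeta(e)$. For closure under arbitrary intersections, given a family $\{A_i\}_{i \in I} \subseteq \zeta(e)$, I would select, for each $i$, some $(\Omega_i,E) \in \zeta$ with $\Omega_i(e) = A_i$, form the soft intersection $\widetilde{\bigcap}_{i \in I}(\Omega_i,E) \in \zeta$ using Definition \ref{D3.2}(2), and read off that its value at $e$ is $\bigcap_{i} A_i$.

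The delicate step, and the main obstacle, is closure under upward directed unions. Given an upward directed family $\{A_i\}_{i \in I}$ in $\zeta(e)$, an arbitrary choice of soft preimages $(\Omega_i,E) \in \zeta$ with $\Omega_i(e) = A_i$ is typically not soft upward directed, since nothing constrains the $\Omega_i$ at parameters $e' \neq e$. To get around this, I would construct a canonical directed family by setting, for each $i \in I$,
\[
(\widehat{\Omega}_i, E) \; := \; \widetilde{\bigcap}\bigl\{(\Omega,E) \in \zeta : A_i \subseteq \Omega(e)\bigr\}.
\]
Each $(\widehat{\Omega}_i,E)$ belongs to $\zeta$ by Definition \ref{D3.2}(2); its value at $e$ contains $A_i$ because every soft set in the defining family does, and is contained in $A_i$ because at least one particular soft preimage with $e$-value exactly $A_i$ appears in that family. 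Hence $\widehat{\Omega}_i(e) = A_i$.

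To finish, I would verify that $\{(\widehat{\Omega}_i,E)\}_{i \in I}$ is soft upward directed: if $A_{i_1} \subseteq A_{i_2}$, then the family defining $\widehat{\Omega}_{i_2}$ is contained in the one defining $\widehat{\Omega}_{i_1}$, so intersecting over the smaller family yields the larger soft set, giving $\widehat{\Omega}_{i_1} \widetilde{\subseteq} \widehat{\Omega}_{i_2}$. Upward directedness of $\{A_i\}$ therefore transfers to $\{\widehat{\Omega}_i\}$, so $\widetilde{\bigcup}_{i \in I}(\widehat{\Omega}_i,E) \in \zeta$ by Definition \ref{D3.2}(3), and its $e$-value is $\bigcup_i A_i$. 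This confirms that $\zeta(e)$ is closed under upward directed unions, completing the verification.
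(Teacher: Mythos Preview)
Your argument is correct, and for axiom~(3) it is actually more careful than the paper's own proof. For axioms~(1) and~(2) you do exactly what the paper does. The divergence is in the treatment of upward directed unions. The paper simply takes representatives $(\Omega_i,E)\in\zeta$ with $\Omega_i(e)=A_i$ and then invokes Proposition~\ref{P3.1} to conclude that $\{(\Omega_i,E)\}_{i\in I}$ is soft upward directed; but Proposition~\ref{P3.1} requires directedness of $\{\Omega_i(e')\}$ at \emph{every} parameter $e'$, whereas only directedness at the single parameter $e$ is given. Arbitrary choices of representatives need not be directed at other parameters, so that step, as written in the paper, is a genuine gap.

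Your construction of the canonical lifts $(\widehat{\Omega}_i,E)=\widetilde{\bigcap}\{(\Omega,E)\in\zeta:A_i\subseteq\Omega(e)\}$ repairs this: it produces representatives that are monotone in $A_i$ by design, so directedness at $e$ transfers to soft directedness of the whole family. This is the right idea, and it is what the paper's proof is implicitly missing. One could alternatively lift each $A_i$ to the soft set that equals $A_i$ at $e$ and $X$ at every other parameter, then intersect with all of $\zeta$ containing it, but your version is cleaner and uses only Definition~\ref{D3.2}(2). In short: your route is not just different but strictly more rigorous on the one nontrivial point.
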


\begin{proof}
By definition, for each $e \in E$, we have $\zeta(e)=\{\Omega(e):(\Omega,E)\in\zeta\}$.
Now, we verify that the family $\zeta(e)=\{\Omega(e):(\Omega,E)\in\zeta\}$ is a convex structure on $X$, for all $e \in E$. Indeed:\\
(1) Since $\Phi_{E}, X_{E}\in \zeta$, it follows that $\emptyset, X \in \zeta(e)$, for all $e \in E$.\\
(2) Let $\left\{\Omega_{i}(e): i \in I\right\}$ be a family of sets in $\zeta(e)$, for all $e\in E$. Since $(\Omega_{i},E) \in \zeta$, for all $i \in I$,  we have $\displaystyle\widetilde{\bigcap}_{i \in I}\left(\Omega_{i}, E\right) \in \zeta$, which implies that $\displaystyle\bigcap_{i \in I} \Omega_{i}(e) \in \zeta(e)$, for all $e\in E$.\\
(3) Let $\left\{\Omega_{i}(e): i \in I\right\}$ be an upward directed family of sets in $\zeta(e)$, for all $e\in E$. By Proposition \ref{P3.1}, $\{(\Omega_{i},E): i\in I\}$ is a soft upward directed family and as $(\Omega_{i},E) \in \zeta$, for all $i \in I$,  we have $\displaystyle\widetilde{\bigcup}_{i \in I}\left(\Omega_{i}, E\right) \in \zeta$. Thus, $\displaystyle\bigcup_{i \in I} \Omega_{i}(e) \in \zeta(e)$, for all $e\in E$.

This shows that $\zeta(e)$ defines a convex structure on $X$ for all $e \in E$.
\end{proof}

\begin{remark}
Proposition \ref{P3.2} tells us that for each parameter $e\in E$, one has a convex structure $\zeta(e)$ on $X$. According to this, a soft convex structure on $X$ induces a parameterized family of convex structures on $X$.
\end{remark}

The following example shows that the converse of Proposition \ref{P3.2}, in general, is not true.
\begin{example} Let $X=\left\{x_1, x_2, x_3\right\}$ and $E=\left\{e_1, e_2\right\}$. Consider the following soft sets on $X$:
$$
\begin{aligned}
& \left(\Omega_1, E\right)=\left\{\left(e_1,\left\{x_1\right\}\right),\left(e_2,\left\{x_1\right\}\right)\right\},\\
& \left(\Omega_2, E\right)=\left\{\left(e_1,\left\{x_1\right\}\right),\left(e_2,\left\{x_1, x_2\right\}\right)\right\},\\
& \left(\Omega_3, E\right)=\left\{\left(e_1, X\right),\left(e_2,\left\{x_1, x_3\right\}\right)\right\},\\
& \left(\Omega_4, E\right)=\left\{\left(e_1,\left\{x_2\right\}\right),\left(e_2,\left\{x_2\right\}\right)\right\},\\
& \left(\Omega_5, E\right)=\left\{\left(e_1,\left\{x_1, x_2\right\}\right),\left(e_2,\left\{x_1, x_2\right\}\right)\right\}.
\end{aligned}
$$
Put
$$
\begin{aligned}
\zeta= & \left\{\Phi_E,\left(\Omega_1, E\right),\left(\Omega_2, E\right),\left(\Omega_3, E\right),\left(\Omega_4, E\right), \left(\Omega_5, E\right),  X_E\right\}.
\end{aligned}
$$
Then,
$$
\begin{aligned}
 \zeta\left(e_1\right)&=\left\{\Phi_E(e_{1}),\Omega_1\left(e_{1}\right), \Omega_2\left(e_{1}\right), \Omega_3\left(e_{1}\right), \Omega_4\left(e_{1}\right), \Omega_5\left(e_{1}\right),X_{E}(e_{1})\right\}\\
&=\left\{\emptyset,\left\{x_1\right\},\left\{x_2\right\},\left\{x_1, x_2\right\}, X\right\}
\end{aligned}
$$
and
$$
\begin{aligned}
\zeta\left(e_2\right)&=\left\{\Phi_E(e_{2}),\Omega_1\left(e_{2}\right), \Omega_2\left(e_{2}\right), \Omega_3\left(e_{2}\right), \Omega_4\left(e_{2}\right), \Omega_5\left(e_{2}\right),X_{E}(e_{2})\right\}\\
&=\left\{\emptyset,\left\{x_1\right\},\left\{x_2\right\},\left\{x_1, x_2\right\},\left\{x_1, x_3\right\}, X\right\}
\end{aligned}
$$
are convex structures on $X$, while $\zeta$ is not a soft convex structure on $X$, because
$$
\left(\Omega_2, E\right)\widetilde{\cap}\left(\Omega_4, E\right)=\left\{\left(e_1, \emptyset\right),\left(e_2,\left\{x_2\right\}\right)\right\} \notin \zeta.
$$
\end{example}

In the previous example we can notice that any family of soft sets is not necessarily a soft convex structure on $X$, even if the family corresponding to each parameter is a convex structure on $X$. When the parameter set $E$ is a singleton the situation changes, as we see in the following result.
\begin{proposition}\label{P3.3}
Let $X$ be any set, $E=\{e\}$, $\zeta=\{(e, \Omega(e)): (\Omega,E)\in \mathcal{S}_{E}(X)\}$ and $\zeta(e)=\{\Omega(e):(e, \Omega(e)) \in \zeta\}$. Then, $\zeta$ is a soft convex structure on $X$ if and only if $\zeta(e)$ is a convex structure on $X$.
\end{proposition}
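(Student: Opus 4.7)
The plan is to exploit the fact that when $E = \{e\}$ is a singleton, soft sets on $X$ are in natural bijection with subsets of $X$ via $(\Omega,E) \leftrightarrow \Omega(e)$, so that all soft-set operations collapse to their classical counterparts. Under this correspondence, the three axioms of a soft convex structure translate literally into the three axioms of a convex structure on $\zeta(e)$.

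First, I would handle the forward implication, which is essentially a direct application of Proposition \ref{P3.2}: if $\zeta$ is a soft convex structure on $X$, then specializing that proposition to the (unique) parameter $e$ gives that $\zeta(e)$ is a convex structure on $X$. No additional work is needed here.

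For the converse, assume $\zeta(e)$ is a convex structure on $X$. I would verify conditions (1), (2) and (3) of Definition \ref{D3.2} one at a time. For (1), from $\emptyset, X \in \zeta(e)$ together with the definition of $\zeta$, we immediately obtain $\Phi_E, X_E \in \zeta$, since the evaluation at $e$ is the only piece of data. For (2), given a family $\{(\Omega_i,E) : i \in I\} \subseteq \zeta$, the intersection $\widetilde{\bigcap}_{i\in I}(\Omega_i,E)$ is the soft set whose value at $e$ is $\bigcap_{i\in I}\Omega_i(e)$; since each $\Omega_i(e) \in \zeta(e)$ and $\zeta(e)$ is closed under arbitrary intersections, this value lies in $\zeta(e)$, which by the singleton correspondence forces $\widetilde{\bigcap}_{i\in I}(\Omega_i,E) \in \zeta$. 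For (3), given a soft upward directed family $\{(\Omega_i,E) : i \in I\} \subseteq \zeta$, Proposition \ref{P3.1} yields that $\{\Omega_i(e) : i \in I\}$ is upward directed in $\zeta(e)$; hence $\bigcup_{i\in I}\Omega_i(e) \in \zeta(e)$, and the singleton correspondence lifts this to $\widetilde{\bigcup}_{i\in I}(\Omega_i,E) \in \zeta$.

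There is no real obstacle in this argument; the statement is essentially a triviality enabled by the singleton hypothesis, and the only point requiring attention is invoking Proposition \ref{P3.1} to move between soft upward directedness and classical upward directedness in step (3). The counterexample preceding the proposition makes clear that, for $|E| \geq 2$, this argument breaks because a soft intersection need not be recoverable parameter-by-parameter within $\zeta$; the singleton case removes precisely that obstruction.
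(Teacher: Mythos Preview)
Your proposal is correct and follows essentially the same route as the paper: the forward direction is handled by Proposition \ref{P3.2}, and the converse verifies the three axioms of Definition \ref{D3.2} via the singleton correspondence, invoking Proposition \ref{P3.1} for the directedness in part (3). One small improvement in your version is that you treat arbitrary intersections in step (2), whereas the paper's proof checks only binary intersections even though Definition \ref{D3.2} requires closure under arbitrary ones.
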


\begin{proof}
It suffices to show that if $\zeta(e)$ is a convex structure on $X$, then $\zeta$  is a soft convex structure. By assumption, $\emptyset, X \in \zeta(e)$, which implies that $(e, \emptyset) \in\zeta$ and $(e, X) \in\zeta$ and so, $\Phi_E, X_E \in \zeta$. Let $(\Omega, E)=(e,\Omega(e))$  and $(O, E)=(e,O(e)) \in \zeta$. Then, $\Omega(e),O(e) \in \zeta(e)$ and as $\zeta(e)$ is a convex structure on $X$, it follows that $\Omega(e) \cap O(e) \in\zeta(e)$.
Therefore, $(\Omega, E) \widetilde{\cap}(O, E)=(e, \Omega(e)) \widetilde{\cap}(e, O(e))=(e, \Omega(e) \cap O(e)) \in \zeta$. Suppose that $\{\left(\Omega_{i}, E\right):i\in I\}$ is a soft upward directed family, where $\left(\Omega_{i}, E\right)=\left(e, \Omega_{i}(e)\right)\in \zeta$, for all $i\in I$. Then, $\{\Omega_{i}(e):i\in I\}$ is an upward directed family of sets belonging to $\zeta(e)$ and as $\zeta(e)$ is a convex structure, it follows that $\displaystyle\bigcup_{i \in I} \Omega_{i}(e)\in\zeta(e)$. Therefore, $\displaystyle\widetilde{\bigcup}_{i\in I}\left(\Omega_{i}, E\right)=\widetilde{\bigcup}_{i\in I}\left(e, \Omega_{i}(e)\right)=\left(e, \bigcup_{i \in I} \Omega_{i}(e)\right) \in \zeta$, which completes the proof.
\end{proof}

\begin{proposition}\label{P3.5}
If \, $\Upsilon$ is a convex structure on a set $X$, then the family of soft sets
\begin{center}
$\zeta(\Upsilon)=\{(\Omega,E)\in \mathcal{S}_{E}(X):\Omega(e)\in\Upsilon \mbox{ for all } e\in E\}$
\end{center}
is a soft convex structure on $X$.
\end{proposition}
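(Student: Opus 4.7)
The plan is to verify the three axioms of a soft convex structure (Definition \ref{D3.2}) directly for $\zeta(\Upsilon)$, using at each step the fact that $\Upsilon$ is a convex structure on $X$ in the classical sense, i.e., $\emptyset, X \in \Upsilon$, $\Upsilon$ is closed under arbitrary intersections, and $\Upsilon$ is closed under unions of upward directed subfamilies.

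First, for axiom (1), I would observe that $\Phi_E(e)=\emptyset$ and $X_E(e)=X$ for every $e\in E$; since $\emptyset,X\in\Upsilon$ by hypothesis, both $\Phi_E$ and $X_E$ satisfy the membership criterion defining $\zeta(\Upsilon)$. Next, for axiom (2), I take an arbitrary family $\{(\Omega_i,E):i\in I\}\subseteq\zeta(\Upsilon)$ and evaluate $\widetilde{\bigcap}_{i\in I}(\Omega_i,E)$ parameterwise: by the definition of soft intersection, its value at $e$ is $\bigcap_{i\in I}\Omega_i(e)$. Each $\Omega_i(e)$ lies in $\Upsilon$ by assumption, hence the intersection lies in $\Upsilon$ as well, and the resulting soft set belongs to $\zeta(\Upsilon)$.

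For axiom (3), which is the most delicate one, I start with a soft upward directed family $\{(\Omega_i,E):i\in I\}\subseteq\zeta(\Upsilon)$. The key step is to invoke Proposition \ref{P3.1}, specifically the equivalence (1)$\Leftrightarrow$(3), which translates soft upward directedness of the family into upward directedness of the crisp family $\{\Omega_i(e):i\in I\}$ of subsets of $X$, for every $e\in E$. Since each $\Omega_i(e)\in\Upsilon$, the closure of $\Upsilon$ under unions of upward directed families gives $\bigcup_{i\in I}\Omega_i(e)\in\Upsilon$ for every $e\in E$. Because this union is precisely the value at $e$ of the soft union $\widetilde{\bigcup}_{i\in I}(\Omega_i,E)$, this soft union belongs to $\zeta(\Upsilon)$.

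There is no real obstacle: the argument is a parameterwise reduction. The only place that is not purely bookkeeping is axiom (3), where the soft notion of upward directedness must be converted to the crisp notion in order to apply the convex-structure axiom in $\Upsilon$, and this is exactly what Proposition \ref{P3.1} provides. I would therefore conclude by stating that $\zeta(\Upsilon)$ fulfils all three clauses of Definition \ref{D3.2} and is thus a soft convex structure on $X$.
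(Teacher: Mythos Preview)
Your proposal is correct and is precisely the routine parameterwise verification that the paper has in mind; the paper itself records the proof simply as ``Straightforward.'' Your use of Proposition \ref{P3.1} for axiom (3) is the natural way to carry out that step.
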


\begin{proof}
Straightforward.
\end{proof}
We will call $\zeta(\Upsilon)$ the \textbf{soft convex structure on $X$ induced by $\Upsilon$}. If each parameter $e \in E$ is assigned to the same set $A \in \Upsilon$; this is, $\Omega(e)=A, \, \forall e\in E$, then the obtained soft convex structure we will call the \textbf{single-set soft convex structure on $X$ induced by $\Upsilon$} (denoted by $\widetilde{\zeta}(\Upsilon)$). Note that for each $e\in E$, we have $\zeta(e)=\widetilde{\zeta}(e)=\Upsilon$.\\

\section{Soft convex hull operator}
In this part, we present the concept of soft convex hull of a soft set and study its relationship with other notions defined in the context of a soft convex space.
\begin{definition}\label{Def4.1}
Let $(X,\zeta, E)$ be a soft convex space and $\Omega_{E}\in\mathcal{S}_{E}(X)$. The intersection of all the soft convex supersets of $\Omega_{E}$, denoted by $co(\Omega_{E})$ is called the \textbf{soft convex hull}  of $\Omega_{E}$; this is,
\begin{center}
$co(\Omega_{E})=\displaystyle\widetilde{\bigcap}\{O_{E}: \Omega_{E}\widetilde{\subseteq} O_{E},\, O_{E}\in\zeta\}$.
\end{center}
\end{definition}
Note that $co(\Omega_{E})$ is the smallest element (in the sense of the soft inclusion) of $\zeta$ that contains $\Omega_{E}$; i.e., if $\Theta_{E}\in\zeta$ and $\Omega_{E}\widetilde{\subseteq} \Theta_{E}$, then $co(\Omega_{E})\widetilde{\subseteq} \Theta_{E}$. The soft operator $co$ is called the \textbf{soft convex hull operator} on $(X,\zeta,E)$.

\begin{proposition}
Let $co$ be soft hull operator in a soft convex space $(X,\zeta,E)$ and $\Omega_{E}, O_{E}\in\mathcal{S}_{E}(X)$. Then, the following properties hold:
\begin{enumerate}[\upshape(1)]
\item $co(\Phi_{E})=\Phi_{E}$ (\textbf{Soft normalization law}),
\item $\Omega_{E}\widetilde{\subseteq} co(\Omega_{E})$ (\textbf{Soft extensive law}),
\item If $\Omega_{E}\widetilde{\subseteq} O_{E}$, then $co(\Omega_{E})\widetilde{\subseteq} co(O_{E})$ (\textbf{Soft monotone law}).
\item $co(co(\Omega_{E}))=co(\Omega_{E})$ (\textbf{Soft idempotent law}),
\item If $\{(\Omega_{i},E): i\in I\}\subseteq \mathcal{S}_{E}(X)$ is a soft upward directed family, then\\
$co\left(\widetilde{\bigcup}_{i\in I} (\Omega_{i},E)\right)=\displaystyle\widetilde{\bigcup}_{i\in I}co(\Omega_{i},E)$ (\textbf{Soft upward directed additive law}).
\item $\Omega_{E}\in\zeta$ if and only if $\Omega_{E}=co(\Omega_{E})$.
\end{enumerate}
\end{proposition}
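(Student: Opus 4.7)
The plan is to work through all six properties directly from Definition~\ref{Def4.1}, using only the axioms in Definition~\ref{D3.2} and the elementary fact that $co(\Omega_{E})\in\zeta$ (as an intersection of members of $\zeta$, by Definition~\ref{D3.2}(2)). Items (1)--(4) and (6) are routine consequences of the definition; the real work is in (5).

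For (1), I would note that $\Phi_{E}\in\zeta$ is itself a soft convex superset of $\Phi_{E}$, so $co(\Phi_{E})\widetilde{\subseteq}\Phi_{E}$, and the reverse inclusion is trivial. For (2), every member of the collection $\{O_{E}\in\zeta:\Omega_{E}\widetilde{\subseteq}O_{E}\}$ contains $\Omega_{E}$, so their intersection does too. For (3), if $\Omega_{E}\widetilde{\subseteq}O_{E}$, then any soft convex superset of $O_{E}$ is automatically a soft convex superset of $\Omega_{E}$, so the intersection defining $co(\Omega_{E})$ is taken over a larger family than the one defining $co(O_{E})$, hence $co(\Omega_{E})\widetilde{\subseteq}co(O_{E})$. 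For (4), since $co(\Omega_{E})\in\zeta$ by Definition~\ref{D3.2}(2), $co(\Omega_{E})$ is itself the smallest soft convex superset of $co(\Omega_{E})$, so $co(co(\Omega_{E}))=co(\Omega_{E})$. For (6), the ``only if'' direction uses that $\Omega_{E}$ is a soft convex superset of itself when $\Omega_{E}\in\zeta$, giving $co(\Omega_{E})\widetilde{\subseteq}\Omega_{E}$; combined with (2) this gives equality. The ``if'' direction uses that $co(\Omega_{E})\in\zeta$ always.

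The main obstacle is (5), which requires two inclusions. The easy direction $\widetilde{\bigcup}_{i\in I}co(\Omega_{i},E)\widetilde{\subseteq}co(\widetilde{\bigcup}_{i\in I}(\Omega_{i},E))$ follows by applying monotonicity~(3) to each inclusion $(\Omega_{i},E)\widetilde{\subseteq}\widetilde{\bigcup}_{j\in I}(\Omega_{j},E)$ and taking the union. For the reverse direction, my strategy is to show that $\widetilde{\bigcup}_{i\in I}co(\Omega_{i},E)$ is itself a soft convex superset of $\widetilde{\bigcup}_{i\in I}(\Omega_{i},E)$; once this is established, the inclusion follows from the minimality of the soft convex hull. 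Containment is immediate from the extensive law~(2), so the crucial step is to show membership in $\zeta$, and for this I need axiom Definition~\ref{D3.2}(3): the family $\{co(\Omega_{i},E):i\in I\}$ must itself be soft upward directed.

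To verify that $\{co(\Omega_{i},E):i\in I\}$ is soft upward directed, given $i_{1},i_{2}\in I$, I use the soft upward directedness of $\{(\Omega_{i},E):i\in I\}$ to pick $i_{3}\in I$ with $(\Omega_{i_{1}},E)\widetilde{\subseteq}(\Omega_{i_{3}},E)$ and $(\Omega_{i_{2}},E)\widetilde{\subseteq}(\Omega_{i_{3}},E)$; applying monotonicity~(3) gives $co(\Omega_{i_{1}},E)\widetilde{\subseteq}co(\Omega_{i_{3}},E)$ and $co(\Omega_{i_{2}},E)\widetilde{\subseteq}co(\Omega_{i_{3}},E)$, as required. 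Since each $co(\Omega_{i},E)\in\zeta$, Definition~\ref{D3.2}(3) then yields $\widetilde{\bigcup}_{i\in I}co(\Omega_{i},E)\in\zeta$, completing the argument. This is the only place where the full strength of the soft convex structure axioms (in particular axiom (3)) is essential; everywhere else only axiom (2) and the definition of $co$ are used.
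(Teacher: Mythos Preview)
Your proposal is correct and follows essentially the same approach as the paper's proof: both derive (1)--(4) and (6) directly from the minimality characterization of $co$ together with $co(\Omega_{E})\in\zeta$, and both handle (5) by first showing (via monotonicity) that $\{co(\Omega_{i},E):i\in I\}$ is soft upward directed, applying Definition~\ref{D3.2}(3) to conclude the union lies in $\zeta$, and then using minimality for the nontrivial inclusion. If anything, you spell out the upward-directedness verification in (5) more explicitly than the paper does.
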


\begin{proof}
(1) By soft set theory, it always holds that $\Phi_{E}\widetilde{\subseteq} co(\Phi_{E})$. On the other hand, since $\Phi_{E}\widetilde{\subseteq} \Phi_{E}$ and $\Phi_{E}\in\zeta$, it follows that $co(\Phi_{E})\widetilde{\subseteq}\Phi_{E}$.\\
(2)  This is an immediate consequence of the definition of $co(\Omega_{E})$.\\
(3) Suppose that $\Omega_{E}\widetilde{\subseteq} O_{E}$. Then, $\Omega_{E}\widetilde{\subseteq} O_{E}\widetilde{\subseteq} co(O_{E})$ and as $co(O_{E})\in\zeta$, it follows that $co(\Omega_{E})\widetilde{\subseteq} co(O_{E})$.\\
(4) By the soft extensive law, $co(\Omega_{E})\widetilde{\subseteq} co(co(\Omega_{E}))$. On the other hand, since $co(\Omega_{E})\widetilde{\subseteq} co(\Omega_{E})$ and $co(\Omega_{E})\in\zeta$, it follows that $co(co(\Omega_{E}))\widetilde{\subseteq} co(\Omega_{E})$.\\
(5) Suppose that $\{(\Omega_{i},E): i\in I\}\subseteq \mathcal{S}_{E}(X)$  is a soft upward directed family. By the soft monotone law, $\{co(\Omega_{i},E): i\in I\}$ is also a soft upward directed family and since $\{co(\Omega_{i},E): i\in I\}\subseteq \zeta$, we have $\displaystyle\widetilde{\bigcup}_{i\in I}co(\Omega_{i},E)\in\zeta$. Now, by the soft extensive law, $\displaystyle\widetilde{\bigcup}_{i\in I}(\Omega_{i},E)\widetilde{\subseteq} \displaystyle\widetilde{\bigcup}_{i\in I}co(\Omega_{i},E)\in\zeta$, which implies that $co\left(\displaystyle\widetilde{\bigcup}_{i\in I}(\Omega_{i},E)\right)\widetilde{\subseteq} \displaystyle\widetilde{\bigcup}_{i\in I}co(\Omega_{i},E)$. The opposite soft inclusion is obviously obtained by the soft monotone law.\\
(6) By the soft extensive law, we always have $\Omega_{E}\widetilde{\subseteq} co(\Omega_{E})$. For the opposite inclusion, suppose that $\Omega_{E}\in\zeta$. Since $\Omega_{E}\widetilde{\subseteq} \Omega_{E}$ and $\Omega_{E}\in\zeta$, we obtain that $co(\Omega_{E})\widetilde{\subseteq} O_{E}$. Conversely, suppose that $\Omega_{E}=co(\Omega_{E})$. Since $co(\Omega_{E})\in\zeta$, the proof is finished.
\end{proof}

\begin{theorem}\label{Theo4.1}
Let $\eta:\mathcal{S}_{E}(X)\to\mathcal{S}_{E}(X)$ be a mapping that satisfies the following properties:
\begin{enumerate}[\upshape(1)]
\item $\eta(\Phi_{E})=\Phi_{E}$ (\textbf{Soft normalization law}),
\item $\Omega_{E}\widetilde{\subseteq} \eta(\Omega_{E})$ (\textbf{Soft extensive law}),
\item If $\Omega_{E}\widetilde{\subseteq} O_{E}$, then $\eta(\Omega_{E})\widetilde{\subseteq} \eta(O_{E})$ (\textbf{Soft monotone law}),
\item $\eta(\eta(\Omega_{E}))=\eta(\Omega_{E})$ (\textbf{Soft idempotent law}),
\item If $\{(\Omega_{i},E): i\in I\}\subseteq \mathcal{S}_{E}(X)$ is a soft upward directed family, then\\
$\eta\left(\widetilde{\bigcup}_{i\in I} (\Omega_{i},E)\right)=\displaystyle\widetilde{\bigcup}_{i\in I}\eta(\Omega_{i},E)$ (\textbf{Soft upward directed additive law}).
\end{enumerate}
Then, the family
\begin{center}
$\zeta_{\eta}=\{\Omega_{E}\in\mathcal{S}_{E}(X):\eta(\Omega_{E})=\Omega_{E}\}$
\end{center}
is a soft convex structure on $X$ such that $\eta(\Omega_{E})=co(\Omega_{E})$ for all $\Omega_{E}\in\mathcal{S}_{E}(X)$.
\end{theorem}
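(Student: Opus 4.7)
The plan is to first check that $\zeta_{\eta}$ satisfies the three axioms of Definition \ref{D3.2}, and then to establish the identity $\eta(\Omega_{E})=co(\Omega_{E})$ by two soft inclusions. All of the work is essentially forced by the five hypotheses on $\eta$, so the proof will mainly be a matter of organizing which axiom is invoked at each step.

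For axiom (1) of a soft convex structure, the soft normalization law gives $\eta(\Phi_{E})=\Phi_{E}$ directly, so $\Phi_{E}\in\zeta_{\eta}$. For $X_{E}$, the soft extensive law yields $X_{E}\widetilde{\subseteq}\eta(X_{E})$, while $\eta(X_{E})\widetilde{\subseteq}X_{E}$ is automatic, hence $\eta(X_{E})=X_{E}$. For axiom (2), given a family $\{(\Omega_{i},E):i\in I\}\subseteq\zeta_{\eta}$ with intersection $\Theta_{E}$, I would apply the soft extensive law to get $\Theta_{E}\widetilde{\subseteq}\eta(\Theta_{E})$, and for the reverse inclusion use the soft monotone law on $\Theta_{E}\widetilde{\subseteq}(\Omega_{i},E)$ to conclude $\eta(\Theta_{E})\widetilde{\subseteq}\eta(\Omega_{i},E)=(\Omega_{i},E)$ for every $i\in I$; then intersecting over $I$ gives $\eta(\Theta_{E})\widetilde{\subseteq}\Theta_{E}$. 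Axiom (3) is the most direct: if $\{(\Omega_{i},E):i\in I\}\subseteq\zeta_{\eta}$ is soft upward directed, the soft upward directed additive law gives
\[
\eta\Bigl(\widetilde{\bigcup}_{i\in I}(\Omega_{i},E)\Bigr)=\widetilde{\bigcup}_{i\in I}\eta(\Omega_{i},E)=\widetilde{\bigcup}_{i\in I}(\Omega_{i},E),
\]
so the union lies in $\zeta_{\eta}$.

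Having verified that $\zeta_{\eta}$ is a soft convex structure, the soft convex hull operator $co$ associated with it by Definition \ref{Def4.1} is well defined. To see $\eta(\Omega_{E})=co(\Omega_{E})$, I would prove two soft inclusions. For $co(\Omega_{E})\widetilde{\subseteq}\eta(\Omega_{E})$: by the soft extensive law $\Omega_{E}\widetilde{\subseteq}\eta(\Omega_{E})$, and by the soft idempotent law $\eta(\eta(\Omega_{E}))=\eta(\Omega_{E})$, so $\eta(\Omega_{E})\in\zeta_{\eta}$ is a soft convex superset of $\Omega_{E}$; the minimality property of $co(\Omega_{E})$ noted after Definition \ref{Def4.1} gives the inclusion. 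For $\eta(\Omega_{E})\widetilde{\subseteq}co(\Omega_{E})$: from $\Omega_{E}\widetilde{\subseteq}co(\Omega_{E})$ the soft monotone law yields $\eta(\Omega_{E})\widetilde{\subseteq}\eta(co(\Omega_{E}))$, and since $co(\Omega_{E})\in\zeta_{\eta}$ we have $\eta(co(\Omega_{E}))=co(\Omega_{E})$.

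I do not expect any genuine obstacle; the slightly delicate point is axiom (2), because the list of hypotheses on $\eta$ says nothing about behavior under arbitrary intersections, so one has to deduce the intersection-closure of $\zeta_{\eta}$ from the combined use of extensive and monotone laws rather than from a direct additivity statement. Once this is handled, the rest reduces to citing the appropriate axiom in the appropriate place.
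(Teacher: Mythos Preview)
Your proposal is correct and mirrors the paper's own proof essentially step for step: the same axioms are invoked at the same places (normalization and extensive for axiom~(1), extensive plus monotone for axiom~(2), additive for axiom~(3), and idempotent/extensive/monotone for the two inclusions establishing $\eta=co$). Even your remark about the ``delicate point'' regarding intersections matches the paper's treatment exactly.
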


\begin{proof}
We will check that the family $\zeta_{\eta}=\{\Omega_{E}\in\mathcal{S}_{E}(X):\eta(\Omega_{E})=\Omega_{E}\}$ is a soft convex structure on $X$:\\
(1) Obviously $\Phi_{E}\in\zeta_{\eta}$, because $\Phi_{E}\in\mathcal{S}_{E}(X)$ and $\eta(\Phi_{E})= \Phi_{E}$ by the soft normalization law. Since $\eta(X_{E})\widetilde{\subseteq} X_{E}$ and $X_{E}\widetilde{\subseteq} \eta(X_{E})$ by the soft extensive law, we have $\eta(X_{E})=X_{E}$. Therefore, also $X_{E}\in \zeta_{\eta}$.\\
(2) Suppose that $\{(\Omega_{i},E): i\in I\}\subseteq\zeta_{\eta}$ and let $(\Theta,E)=\displaystyle\widetilde{\bigcap}_{i\in I} (\Omega_{i},E)$. Then, $(\Omega_{i},E)\in \mathcal{S}_{E}(X)$ and $\eta(\Omega_{i},E)= (\Omega_{i},E)$, for all $i\in I$. Observe that  $(\Theta,E)=\displaystyle\widetilde{\bigcap}_{i\in I} (\Omega_{i},E)\widetilde{\subseteq} (\Omega_{j},E)$ for all $j\in I$. By the soft monotone law, we have $\eta(\Theta,E)\widetilde{\subseteq} \eta(\Omega_{j},E)=(\Omega_{j},E)$ for all $j\in I$. Thus, $\eta(\Theta,E)\widetilde{\subseteq} \displaystyle\widetilde{\bigcap}_{i\in I} (\Omega_{i},E)=(\Theta,E)$. By the soft extensive law, we obtain that $(\Theta,E)\widetilde{\subseteq} \eta(\Theta,E)$. Therefore,
$\eta(\Theta,E)= (\Theta,E)$ and so, $(\Theta,E)\in \zeta_{\eta}$.\\
(3) Suppose that $\{(\Omega_{i},E): i\in I\}\subseteq\zeta_{\eta}$ is a soft upward directed family. Then, $(\Omega_{i},E)\in \mathcal{S}_{E}(X)$ and $\eta(\Omega_{i},E)= (\Omega_{i},E)$ for all $i\in I$. Now, as $\displaystyle\widetilde{\bigcup}_{i\in I} (\Omega_{i},E)\in \mathcal{S}_{E}(X)$, by the soft upward directed additive law, we get that $\eta\left(\displaystyle\widetilde{\bigcup}_{i\in I} (\Omega_{i},E)\right)=\displaystyle\widetilde{\bigcup}_{i\in I} \eta(\Omega_{i},E)=\displaystyle\widetilde{\bigcup}_{i\in I} (\Omega_{i},E)$ and hence, $\displaystyle\widetilde{\bigcup}_{i\in I} (\Omega_{i},E)\in\zeta_{\eta}$.

Certainly $\zeta_{\eta}$ is a soft convex structure on $X$. Now, we will show that with respect to this, $\eta(\Omega_{E})=co(\Omega_{E})$ for all $\Omega_{E}\in \mathcal{S}_{E}(X)$. Note that $\eta(\Omega_{E})\in \zeta_{\eta}$, because by the soft idempotent law, $\eta(\eta(\Omega_{E}))=\eta(\Omega_{E})$. Moreover, by the soft extensive law, $\Omega_{E}\widetilde{\subseteq}\eta(\Omega_{E})$, and since $\eta(\Omega_{E})\in\zeta_{\eta}$, it follows that $co(\Omega_{E})\widetilde{\subseteq} \eta(\Omega_{E})$. On the other hand, given that $co(\Omega_{E})\in\zeta_{\eta}$  and $\Omega_{E}\widetilde{\subseteq} co(\Omega_{E})$, we have $\eta(\Omega_{E})\widetilde{\subseteq} \eta(co(\Omega_{E}))=co(\Omega_{E})$. Therefore, $\eta(\Omega_{E})=co(\Omega_{E})$ whenever $\Omega_{E}\in \mathcal{S}_{E}(X)$.
\end{proof}

We will call a \textbf{soft hull operator} to any mapping $\eta:\mathcal{S}_{E}(X)\to \mathcal{S}_{E}(X)$ that satisfies properties (1)-(5) of Theorem \ref{Theo4.1}.

\begin{definition} Let $(X,\zeta,E)$ be a soft convex space and $(\Omega,E)\in\mathcal{S}_{E}(X)$. Then, we associate with $(\Omega,E)$ a soft set on $X$, denoted by $(co\Omega,E)$ and defined as
$co\Omega(e)=co(\Omega(e))$, where $co(\Omega(e))$ is the convex hull of $\Omega(e)$ in $\zeta(e)$ for all $e\in E$.
\end{definition}

\begin{proposition}\label{Prop4.2}
Let $(X,\zeta,E)$ be a soft convex space. Then, $(co\Omega,E)\widetilde{\subseteq}co(\Omega,E)$ for all $(\Omega,E)\in\mathcal{S}_{E}(X)$.
\end{proposition}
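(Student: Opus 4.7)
The plan is to reduce the soft inclusion to a pointwise (parameterwise) inclusion and then invoke the minimality of the ordinary convex hull in each crisp convex structure $\zeta(e)$.

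First, I would unfold the two sides at an arbitrary parameter $e\in E$. Write $(\Theta,E):=co(\Omega,E)$, so by Definition \ref{Def4.1} we have $(\Theta,E)\in\zeta$ and $(\Omega,E)\widetilde{\subseteq}(\Theta,E)$. By Proposition \ref{P3.2}, the family $\zeta(e)=\{\Omega'(e):(\Omega',E)\in\zeta\}$ is a (crisp) convex structure on $X$, and the definition of the soft inclusion gives $\Omega(e)\subseteq\Theta(e)$ for every $e\in E$. Moreover, since $(\Theta,E)\in\zeta$, the set $\Theta(e)$ belongs to $\zeta(e)$.

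Next, I would bring in the other side. Recall that $(co\Omega)(e)$ is defined as $co(\Omega(e))$, the convex hull of $\Omega(e)$ taken in $\zeta(e)$; that is, the smallest element of $\zeta(e)$ containing $\Omega(e)$. Combining the two observations above, $\Theta(e)$ is a member of $\zeta(e)$ that contains $\Omega(e)$, so by the minimality property of $co(\Omega(e))$ inside $\zeta(e)$ we obtain
\[
(co\Omega)(e)=co(\Omega(e))\subseteq \Theta(e)=(co(\Omega,E))(e).
\]
Since this holds for every $e\in E$, the definition of $\widetilde{\subseteq}$ yields $(co\Omega,E)\widetilde{\subseteq} co(\Omega,E)$, as required.

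I do not expect a genuine obstacle here: the argument is entirely a bookkeeping exercise unpacking the two definitions of ``hull'' (soft vs.\ crisp) and applying the minimality of the crisp convex hull inside the induced $e$-parameter convex structure. The only subtle point worth emphasising in the write-up is that the crisp minimality is legitimate precisely because $\Theta(e)\in\zeta(e)$, which is itself guaranteed by Proposition \ref{P3.2}.
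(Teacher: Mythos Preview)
Your argument is correct and mirrors the paper's proof almost exactly: both set $(\Theta,E)=co(\Omega,E)$, observe that $\Theta(e)\in\zeta(e)$ contains $\Omega(e)$ for each $e\in E$, and then invoke the minimality of the crisp hull $co(\Omega(e))$ in $\zeta(e)$ to conclude $(co\Omega)(e)\subseteq\Theta(e)$. Your write-up is slightly more explicit in citing Proposition~\ref{P3.2} to justify that $\zeta(e)$ is a convex structure, but there is no substantive difference in approach.
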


\begin{proof}
For each $e\in E$, we have  $co(\Omega(e))$ is the smallest soft $\zeta(e)$-convex set that contains $\Omega(e)$. Put $(\Theta,E)=co(\Omega,E)$. Then, $\Theta(e)$ is also a soft $\zeta(e)$-convex set containing $\Omega(e)$ for all $e\in E$. Thus, $co\Omega(e)=co(\Omega(e)) \widetilde{\subseteq} \Theta(e)$ for all $e\in E$. Therefore, $(co\Omega, E) \widetilde{\subseteq} co(\Omega, E)$.
\end{proof}

\begin{corollary}
Let $(X, \zeta, E)$ be a soft convex space and $(\Omega,E)\in\mathcal{S}_{E}(X)$. Then, $(co\Omega, E)=co(\Omega, E)$ if and only if $(co\Omega^{c},E)$ is a soft $\zeta$-concave set.
\end{corollary}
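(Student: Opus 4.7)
The plan is to reduce the biconditional to the simpler intermediate statement that $(co\Omega,E)=co(\Omega,E)$ if and only if $(co\Omega,E)\in\zeta$. Once this is in hand, the right-hand side of the corollary is just a reformulation via Definition \ref{D3.3}: reading $(co\Omega^{c},E)$ as the soft complement $((co\Omega)^{c},E)$, this soft set is soft $\zeta$-concave precisely when its soft complement $(co\Omega,E)$ belongs to $\zeta$.

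To establish the intermediate equivalence, one direction is immediate because $co(\Omega,E)\in\zeta$ by Definition \ref{Def4.1}. For the converse, assume $(co\Omega,E)\in\zeta$. The pointwise inclusion $\Omega(e)\subseteq co(\Omega(e))=co\Omega(e)$ for each $e\in E$ gives $(\Omega,E)\widetilde{\subseteq}(co\Omega,E)$, so the minimality property of $co(\Omega,E)$ recorded immediately after Definition \ref{Def4.1} forces $co(\Omega,E)\widetilde{\subseteq}(co\Omega,E)$; together with Proposition \ref{Prop4.2} this yields equality. Chaining: $(co\Omega,E)=co(\Omega,E)\iff (co\Omega,E)\in\zeta\iff (co\Omega^{c},E)$ is soft $\zeta$-concave.

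The only real subtlety I foresee is parsing the notation $(co\Omega^{c},E)$: it must be read as the soft complement of the soft set $(co\Omega,E)$, rather than as the soft set whose $e$-component is $co(\Omega^{c}(e))$. Under the latter reading the biconditional can fail (for instance, if $\zeta=\zeta(\Upsilon)$ is induced by a convex structure $\Upsilon$ in which $X\setminus co(X\setminus A)\notin\Upsilon$ for some $A\subseteq X$, the left-hand side always holds, while the alternative reading of the right-hand side need not). With the correct reading settled, no further difficulty arises, since every remaining step is a direct appeal to Proposition \ref{Prop4.2}, the minimality of the soft convex hull, and Definition \ref{D3.3}.
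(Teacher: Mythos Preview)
Your proof is correct and follows essentially the same route as the paper: both argue that $(co\Omega,E)=co(\Omega,E)$ is equivalent to $(co\Omega,E)\in\zeta$ via Proposition~\ref{Prop4.2} together with the minimality of the soft convex hull, and then translate membership in $\zeta$ into concavity of the complement via Definition~\ref{D3.3}. Your explicit remark on how $(co\Omega^{c},E)$ must be parsed is a useful clarification that the paper leaves implicit.
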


\begin{proof}
If $(co\Omega, E)=co(\Omega, E)$, then $(co\Omega, E)$ is a soft convex set and hence, $(co\Omega^{c}, E)$ is a soft $\zeta$-concave set. Conversely, if $(co\Omega^{c}, E)$ is a soft $\zeta$-concave set, then $(co\Omega, E)$ is a soft convex set containing $(\Omega, E)$. By Proposition \ref{Prop4.2}, $(co\Omega,E)\widetilde{\subseteq}co(\Omega,E)$ and by the definition of soft convex hull of $(\Omega, E)$, each soft convex set on $X$ that contains $(\Omega, E)$ will contain $co(\Omega, E)$. Thus, $co(\Omega, E) \widetilde{\subseteq}(co\Omega, E)$ and hence, $(co\Omega, E)=co(\Omega, E)$.
\end{proof}

\begin{example}
Consider the soft convex space $(X,\zeta,E)$ given in Example \ref{Exa3.4} and the soft set $(\Omega,E)=\left\{\left(e_1,\left\{x_{3}\right\}\right),\left(e_2,\left\{x_{1},x_{3}\right\}\right)\right\}$. Then,
$co(\Omega,E)=X_{E}$ and as $co(\Omega(e_{1}))=\left\{x_3\right\}$ and $co(\Omega(e_{2}))=\left\{x_1, x_3\right\}$, we have $(co\Omega,E)=\left\{\left(e_1,\left\{x_3\right\}\right),\left(e_2,\left\{x_1, x_3\right\}\right)\right\}$. Observe that $(co\Omega,E)\widetilde{\subseteq} co(\Omega,E)$, but $(co\Omega,E)\neq co(\Omega,E)$.
\end{example}

\begin{definition}
Let $(X,\zeta_{X},E)$ and $(Y,\zeta_{Y},E)$ be two soft convex spaces. A soft function $f:X\to Y$  is called:
\begin{enumerate}[\upshape(1)]
\item \textbf{Soft convex preserving} (briefly \textbf{SCP}), if $O_{E}\in\zeta_Y$ implies  $f^{-1} (O_{E})\in\zeta_{X}$.
\item \textbf{Soft convex to convex} (briefly \textbf{SCC}), if $\Omega_{E}\in\zeta_X$  implies $f(\Omega_{E})\in\zeta_{Y}$.
	\end{enumerate}
\end{definition}

We will use the notation $f:(X,\zeta_{X},E)\to (Y,\zeta_{Y},E)$ to represent a soft function between two soft convex spaces $(X,\zeta_{X},E)$ and $(Y,\zeta_{Y},E)$.

Henceforth, given $\Omega_{E}\in \mathcal{S}_{E}(X)$, put $\mathcal{P}(\Omega_{E})=\{O_{E}\in \mathcal{S}_{E}(X): O_{E}\widetilde{\subseteq} \Omega_{E}\}$.
\begin{theorem}\label{teorema 4.2}
Let $f:(X,\zeta_{X},E)\to (Y,\zeta_{Y},E)$  be a soft function between two soft convex spaces. Then, the following properties are equivalent:
\begin{enumerate}[\upshape(1)]
\item $f$ is SCP.
\item If $\{(\Omega_{i},E): i\in I\}\subseteq \mathcal{S}_{E}(X)$ is a soft upward directed family, then\\
$f\left(co_{X} \left(\displaystyle\widetilde{\bigcup}_{i\in I} (\Omega_{i},E)\right)\right)\widetilde{\subseteq} \displaystyle\widetilde{\bigcup}_{i\in I} co_{Y}(f(\Omega_{i},E))$.
\item $f(co_{X}(\Omega_{E}))\widetilde{\subseteq} co_{Y}(f(\Omega_{E}))$ for all $\Omega_{E}\in\mathcal{S}_{E}(X)$.
\end{enumerate}
\end{theorem}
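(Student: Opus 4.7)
The plan is to prove the cycle $(1)\Rightarrow(3)\Rightarrow(2)\Rightarrow(1)$, in analogy with the standard closure-operator characterizations of continuity in topology.

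For $(1)\Rightarrow(3)$, I fix $\Omega_{E}\in\mathcal{S}_{E}(X)$ and apply SCP to the soft convex set $co_{Y}(f(\Omega_{E}))\in\zeta_{Y}$, obtaining $f^{-1}(co_{Y}(f(\Omega_{E})))\in\zeta_{X}$. The chain $\Omega_{E}\widetilde{\subseteq}f^{-1}(f(\Omega_{E}))\widetilde{\subseteq}f^{-1}(co_{Y}(f(\Omega_{E})))$ follows from the usual soft inclusion $\Omega_{E}\widetilde{\subseteq}f^{-1}(f(\Omega_{E}))$ combined with soft monotonicity of $f^{-1}$ and the soft extensive law for $co_{Y}$. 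The minimality of the soft convex hull then gives $co_{X}(\Omega_{E})\widetilde{\subseteq}f^{-1}(co_{Y}(f(\Omega_{E})))$, and applying $f$ (together with $f(f^{-1}(O_{E}))\widetilde{\subseteq}O_{E}$) yields $f(co_{X}(\Omega_{E}))\widetilde{\subseteq}co_{Y}(f(\Omega_{E}))$.

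For $(3)\Rightarrow(2)$, let $\{(\Omega_{i},E):i\in I\}$ be a soft upward directed family. Since $f$ preserves soft inclusion, $\{f(\Omega_{i},E):i\in I\}$ is also soft upward directed. Because image under $f$ distributes over unions parameter-wise, one has $f\bigl(\widetilde{\bigcup}_{i\in I}(\Omega_{i},E)\bigr)=\widetilde{\bigcup}_{i\in I}f(\Omega_{i},E)$. Applying (3) to $\widetilde{\bigcup}_{i\in I}(\Omega_{i},E)$ and then invoking the soft upward directed additive law for $co_{Y}$ (property (5) of the proposition preceding Theorem~\ref{Theo4.1}) delivers $f\bigl(co_{X}(\widetilde{\bigcup}_{i\in I}(\Omega_{i},E))\bigr)\widetilde{\subseteq}co_{Y}\bigl(\widetilde{\bigcup}_{i\in I}f(\Omega_{i},E)\bigr)=\widetilde{\bigcup}_{i\in I}co_{Y}(f(\Omega_{i},E))$, as required.

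For $(2)\Rightarrow(1)$, let $O_{E}\in\zeta_{Y}$ and apply (2) to the singleton family $\{(f^{-1}(O_{E}),E)\}$, which is trivially soft upward directed. This gives $f(co_{X}(f^{-1}(O_{E})))\widetilde{\subseteq}co_{Y}(f(f^{-1}(O_{E})))\widetilde{\subseteq}co_{Y}(O_{E})=O_{E}$, where the last equality uses property (6) of the previous proposition since $O_{E}\in\zeta_{Y}$. Taking preimages and using $A\widetilde{\subseteq}f^{-1}(f(A))$, I conclude $co_{X}(f^{-1}(O_{E}))\widetilde{\subseteq}f^{-1}(O_{E})$; the reverse soft inclusion is the soft extensive law, so $f^{-1}(O_{E})$ is a fixed point of $co_{X}$ and hence belongs to $\zeta_{X}$.

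The routine manipulations are the image/preimage identities and the fixed-point characterization of soft convex sets. The only step requiring a moment of care is $(3)\Rightarrow(2)$, where one must note both that soft upward directedness is preserved by $f$ and that soft unions commute with $f$ in order to legitimately invoke the soft upward directed additive law for $co_{Y}$; beyond that, the argument is standard.
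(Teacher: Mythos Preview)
Your proof is correct and follows essentially the same approach as the paper. The only cosmetic differences are that the paper establishes the equivalence via the four implications $(1)\Leftrightarrow(3)$ and $(2)\Leftrightarrow(3)$ rather than your cycle, and in $(3)\Rightarrow(2)$ the paper invokes the soft upward directed additive law for $co_{X}$ (splitting $co_{X}$ of the union first) whereas you invoke it for $co_{Y}$ (applying $(3)$ to the union first); both routes are equally valid and of the same length.
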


\begin{proof}
(1) $\Longrightarrow$ (3). Suppose that $\Omega_{E}\in\mathcal{S}_{E}(X)$ and let $\Theta_{E}\in \zeta_{Y}$ such that $f(\Omega_{E})\widetilde{\subseteq} \Theta_{E}$. Since $f$ is SCP, we have $f^{-1}(\Theta_{E})\in \zeta_{X}$ and as $\Omega_{E}\widetilde{\subseteq} f^{-1}(f(\Omega_{E}))\widetilde{\subseteq} f^{-1}(\Theta_{E})$, it follows that $co_{X}(\Omega_{E})\widetilde{\subseteq} f^{-1}(\Theta_{E})$. Thus, $f(co_{X}(\Omega_{E}))\widetilde{\subseteq} f(f^{-1}(\Theta_{E}))\widetilde{\subseteq} \Theta_{E}$. Since $\Theta_{E}$ is an arbitrary soft $\zeta_{Y}$-convex superset of $f(co_{X}(\Omega_{E}))$, we get that $f(co_{X}(\Omega_{E}))\widetilde{\subseteq} \displaystyle\widetilde{\bigcap}\left\{\Theta_{E}\in\zeta_{Y}: f(\Omega_{E})\widetilde{\subseteq} \Theta_{E}\right\}=co_{Y}(f(\Omega_{E}))$.

(3) $\Longrightarrow$ (1). Let $O_{E}\in \zeta_{Y}$. Then, $f^{-1}(O_{E})\in
\mathcal{S}_{E}(X)$. By the hypothesis and the soft monotone law, we have $f(co_{X}(f^{-1}(O_{E})))\widetilde{\subseteq} co_{Y}(f(f^{-1}(O_{E})))\widetilde{\subseteq} co_{Y}(O_{E})=O_{E}$ and so,
$co_{X}(f^{-1}(O_{E}))\widetilde{\subseteq} f^{-1}(f(co_{X}(f^{-1}(O_{E}))))\widetilde{\subseteq} f^{-1}(O_{E})$. On the other hand, since the soft inclusion $f^{-1}(O_{E})\widetilde{\subseteq} co_{X}(f^{-1}(O_{E}))$ is always true, we conclude that $co_{X}(f^{-1}(O_{E}))= f^{-1}(O_{E})$ and hence, $f^{-1}(O_{E})\in \zeta_{X}$,  which proves that $f$ is SCP.

(2) $\Longrightarrow$ (3).
Let $\Omega_{E}\in \mathcal{S}_{E}(X)$. Using the fact that $\{f\left(O_{E}\right):O_{E}\in\mathcal{P}(\Omega_{E})\}$ is a soft upward directed family, we have
\begin{eqnarray*}
f(co_{X}(\Omega_{E}))&=& f\left(co_{X}\left(\displaystyle\widetilde{\bigcup}\{O_{E}:O_{E}\in\mathcal{P}(\Omega_{E})\}\right)\right)\\
&\widetilde{\subseteq} & \displaystyle\widetilde{\bigcup}\{co_{Y}(f\left(O_{E})\right): O_{E}\in\mathcal{P}(\Omega_{E})\}\\
&= &co_{Y}\left(\displaystyle\widetilde{\bigcup}\{f(O_{E}): O_{E}\in\mathcal{P}(\Omega_{E})\}\right)\\
&= &co_{Y}\left(f\left(\displaystyle\widetilde{\bigcup}\{O_{E}: O_{E}\in\mathcal{P}(\Omega_{E})\}\right)\right)\\
&=& co_{Y}(f(\Omega_{E})).
\end{eqnarray*}

(3) $\Longrightarrow$ (2). Let $\{(\Omega_{i},E): i\in I\}\subseteq \mathcal{S}_{E}(X)$ be a soft upward directed family. Then,
\begin{eqnarray*}
f\left(co_{X} \left(\displaystyle\widetilde{\bigcup}_{i\in I} (\Omega_{i},E)\right)\right)&=& f\left(\displaystyle\widetilde{\bigcup}_{i\in I}co_{X} \left( \Omega_{i},E\right)\right)\\
&= & \displaystyle\widetilde{\bigcup}_{i\in I} f\left(co_{X} \left( \Omega_{i},E\right)\right)\\
&\widetilde{\subseteq} & \displaystyle\widetilde{\bigcup}_{i\in I} co_{Y} \left(f\left(\Omega_{i},E\right)\right).
\end{eqnarray*}
\end{proof}

\begin{proposition}
Let $f: (X,\zeta_{X})\to(Y,\zeta_{Y})$ and $g:(Y,\zeta_{Y}) \to(Z,\zeta_{Z})$ be two soft functions between soft convex spaces. If $f$ and $g$ are SCP, then  $g \circ f$  is also SCP.
\end{proposition}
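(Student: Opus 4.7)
The plan is to verify the SCP property directly from the definition, using the standard identity that the preimage of a composition equals the composition of preimages (applied to the soft setting). That is, for any $O_E \in \mathcal{S}_E(Z)$, I expect $(g \circ f)^{-1}(O_E) = f^{-1}(g^{-1}(O_E))$. This should be an elementwise verification at each parameter $e \in E$: on the underlying sets we have $(g \circ f)^{-1}(O(e)) = f^{-1}(g^{-1}(O(e)))$ by ordinary set-theoretic reasoning, and the soft preimage is defined parameterwise, so the soft identity follows immediately.

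With that identity in hand, the main argument is a two-line chain. First I would pick an arbitrary $O_E \in \zeta_Z$. Since $g$ is SCP, $g^{-1}(O_E) \in \zeta_Y$. Since $f$ is SCP, applying the hypothesis now to the soft $\zeta_Y$-convex set $g^{-1}(O_E)$ gives $f^{-1}(g^{-1}(O_E)) \in \zeta_X$. Rewriting the left-hand side as $(g \circ f)^{-1}(O_E)$ via the preimage identity concludes that $(g \circ f)^{-1}(O_E) \in \zeta_X$, so $g \circ f$ is SCP.

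There is no real obstacle in this proof; the only subtlety worth flagging is that the composition $g \circ f$ is itself a soft function in the sense of the paper, since it is induced by the ordinary composition of the underlying maps $X \to Y \to Z$, and the soft preimage is compatible with this composition. Once that compatibility is acknowledged, the result reduces to applying the SCP hypothesis twice in sequence.
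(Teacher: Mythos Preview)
Your argument is correct and is exactly the standard direct verification one would expect. The paper itself states this proposition without proof, so there is nothing to compare against; your use of the parameterwise identity $(g\circ f)^{-1}(O_E)=f^{-1}(g^{-1}(O_E))$ followed by two applications of the SCP hypothesis is the natural route.
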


\begin{theorem}\label{Theo4.3}
Let $f:(X,\zeta_{X},E)\to (Y,\zeta_{Y},E)$  be a soft function between two soft convex spaces. Then, the following properties are equivalent:
\begin{enumerate}[\upshape(1)]
\item $f$ is SCC.
\item If $\{(\Omega_{i},E): i\in I\}\subseteq \mathcal{S}_{E}(X)$ is a soft upward directed family, then\\
$\displaystyle\widetilde{\bigcup}_{i\in I} co_{Y}(f(\Omega_{i},E))\widetilde{\subseteq} f\left(co_{X} \left(\displaystyle\widetilde{\bigcup}_{i\in I} (\Omega_{i},E)\right)\right)$.
\item $co_{Y}(f(\Omega_{E}))\widetilde{\subseteq} f(co_{X}(\Omega_{E}))$ for all $\Omega_{E}\in\mathcal{S}_{E}(X)$.
\end{enumerate}
\end{theorem}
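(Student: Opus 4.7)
The plan is to mirror the structure of the proof of Theorem \ref{teorema 4.2}, with the roles of $co_X$ and $co_Y$ interchanged so that the chain of inclusions goes in the opposite direction. The circular strategy I will follow is (1)$\Longrightarrow$(3)$\Longrightarrow$(1), then (3)$\Longleftrightarrow$(2); each implication is extremely short once the appropriate hull law is invoked.

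For (1)$\Longrightarrow$(3), I would take $\Omega_E\in\mathcal{S}_E(X)$ and note that $co_X(\Omega_E)\in\zeta_X$ by the soft idempotent law. SCC then gives $f(co_X(\Omega_E))\in\zeta_Y$, and since $\Omega_E\widetilde{\subseteq}co_X(\Omega_E)$ implies $f(\Omega_E)\widetilde{\subseteq}f(co_X(\Omega_E))$, the defining property of $co_Y(f(\Omega_E))$ as the smallest soft $\zeta_Y$-convex superset of $f(\Omega_E)$ yields $co_Y(f(\Omega_E))\widetilde{\subseteq}f(co_X(\Omega_E))$. For (3)$\Longrightarrow$(1), I would take $\Omega_E\in\zeta_X$, so $co_X(\Omega_E)=\Omega_E$; then (3) gives $co_Y(f(\Omega_E))\widetilde{\subseteq}f(\Omega_E)$, and combined with the soft extensive law this forces $f(\Omega_E)=co_Y(f(\Omega_E))\in\zeta_Y$.

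For (3)$\Longrightarrow$(2), given a soft upward directed family $\{(\Omega_i,E):i\in I\}$, the soft monotone law promotes $\{co_X(\Omega_i,E):i\in I\}$ to a soft upward directed family, so the soft upward directed additive law for $co_X$ applies. Applying (3) pointwise, commuting $f$ with soft unions, and then invoking the additive law produces the chain
\[
\widetilde{\bigcup}_{i\in I} co_Y(f(\Omega_i,E))\,\widetilde{\subseteq}\,\widetilde{\bigcup}_{i\in I} f(co_X(\Omega_i,E)) = f\Bigl(\widetilde{\bigcup}_{i\in I} co_X(\Omega_i,E)\Bigr) = f\Bigl(co_X\bigl(\widetilde{\bigcup}_{i\in I}(\Omega_i,E)\bigr)\Bigr).
\]
For (2)$\Longrightarrow$(3), I would reuse the trick from the proof of Theorem \ref{teorema 4.2}: the collection $\mathcal{P}(\Omega_E)$ is soft upward directed with $\widetilde{\bigcup}\mathcal{P}(\Omega_E)=\Omega_E$, and because $\Omega_E$ itself lies in $\mathcal{P}(\Omega_E)$, the term $co_Y(f(\Omega_E))$ is one of the summands on the left-hand side of (2); the inequality then immediately delivers $co_Y(f(\Omega_E))\widetilde{\subseteq}f(co_X(\Omega_E))$.

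I expect no real obstacle, since each implication reduces to a single application of a soft hull law once the correct setup is made. The only item worth pausing over is the identity $\widetilde{\bigcup}_{i\in I}f(co_X(\Omega_i,E))=f(\widetilde{\bigcup}_{i\in I}co_X(\Omega_i,E))$ used in step (3)$\Longrightarrow$(2); this follows parameterwise from the definition of the image of a soft set under a soft function, and is the soft counterpart of the standard fact that images commute with arbitrary unions.
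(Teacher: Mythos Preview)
Your proposal is correct and follows essentially the same route as the paper: the implications (1)$\Longleftrightarrow$(3) and (3)$\Longrightarrow$(2) are argued identically, and for (2)$\Longrightarrow$(3) both you and the paper apply (2) to the soft upward directed family $\mathcal{P}(\Omega_E)$. Your version of (2)$\Longrightarrow$(3) is in fact slightly cleaner, since you simply observe that $co_Y(f(\Omega_E))$ already occurs as a summand on the left of (2), whereas the paper first invokes the soft upward directed additive law for $co_Y$ to rewrite $co_Y(f(\Omega_E))$ as the full union $\widetilde{\bigcup}_{O_E\in\mathcal{P}(\Omega_E)}co_Y(f(O_E))$ before applying (2).
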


\begin{proof}
(1) $\Longrightarrow$ (3). Let $\Omega_{E}\in\mathcal{S}_{E}(X)$. Then,
$co_{X}(\Omega_{E})\in \zeta_{X}$
and as $f$ is SCC, we have $f(co_{X}(\Omega_{E}))\in \zeta_{Y}$, which implies that
$co_{Y}(f(co_{X}(\Omega_{E})))\widetilde{\subseteq} f(co_{X}(\Omega_{E}))$. By the soft extensive and soft monotone laws, we conclude that
$co_{Y}(f(\Omega_{E}))\widetilde{\subseteq} co_{Y}(f(co_{X}(\Omega_{E})))\widetilde{\subseteq} f(co_{X}(\Omega_{E}))$.

(3) $\Longrightarrow$ (1). Let $\Omega_{E}\in \zeta_{X}$. Then, $\Omega_{E}\in\mathcal{S}_{E}(X)$ and $co_{X}(\Omega_{E})=\Omega_{E}$. By hypothesis, we have
$co_{Y}(f(\Omega_{E}))\widetilde{\subseteq} f(co_{X}(\Omega_{E}))=f(\Omega_{E})$ and so,
$f(\Omega_{E})\in \zeta_{Y}$, which shows that $f$ is SCC.

(2) $\Longrightarrow$ (3).
Let $\Omega_{E}\in \mathcal{S}_{E}(X)$. Since $\{f\left(O_{E}\right):O_{E}\in\mathcal{P}(\Omega_{E})\}$ is a soft upward directed family, we have
\begin{eqnarray*}
co_{X}(f(\Omega_{E}))&=& co_{X}\left(f\left(\displaystyle\widetilde{\bigcup}\{O_{E}:O_{E}\in\mathcal{P}(\Omega_{E})\}\right)\right)\\
&= & co_{X}\left(\displaystyle\widetilde{\bigcup}\{f\left(O_{E}\right):O_{E}\in\mathcal{P}(\Omega_{E})\}\right)\\
&= & \displaystyle\widetilde{\bigcup}\{co_{X}\left(f\left(O_{E}\right)\right):O_{E}\in\mathcal{P}(\Omega_{E})\}\\
&\widetilde{\subseteq}& f\left(co_{Y}\left(\displaystyle\widetilde{\bigcup}\{O_{E}:O_{E}\in\mathcal{P}(\Omega_{E})\}\right)\right)\\
&=& f(co_{Y}(\Omega_{E})).
\end{eqnarray*}

(3) $\Longrightarrow$ (2). Let $\{(\Omega_{i},E): i\in I\}\subseteq \mathcal{S}_{E}(X)$ be a soft upward directed family. Then,
\begin{eqnarray*}
\displaystyle\widetilde{\bigcup}_{i\in I} co_{Y} \left(f\left(\Omega_{i},E\right)\right)&\widetilde{\subseteq}& \displaystyle\widetilde{\bigcup}_{i\in I} f\left(co_{X} \left( \Omega_{i},E\right)\right)\\
&= & f\left(\displaystyle\widetilde{\bigcup}_{i\in I}co_{X} \left( \Omega_{i},E\right)\right)\\
&= & f\left(co_{X} \left(\displaystyle\widetilde{\bigcup}_{i\in I} (\Omega_{i},E)\right)\right).
\end{eqnarray*}
\end{proof}

\begin{proposition}
Let $f: (X,\zeta_{X})\to(Y,\zeta_{Y})$ and $g:(Y,\zeta_{Y}) \to(Z,\zeta_{Z})$ be two soft functions between soft convex spaces. If $f$ and $g$ are SCC, then  $g \circ f$  is also SCC.
\end{proposition}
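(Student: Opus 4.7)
The plan is to unwind the definition of SCC directly and chain the two hypotheses. Recall that $f:(X,\zeta_{X},E)\to(Y,\zeta_{Y},E)$ is SCC precisely when, for every $\Omega_{E}\in\zeta_{X}$, the image $f(\Omega_{E})$ lies in $\zeta_{Y}$; the analogous statement holds for $g$. So the natural approach is: start with an arbitrary $\Omega_{E}\in\zeta_{X}$, apply SCC of $f$ to get $f(\Omega_{E})\in\zeta_{Y}$, then apply SCC of $g$ to get $g(f(\Omega_{E}))\in\zeta_{Z}$. This is a two-line argument.

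The only genuine content is to justify the identity $(g\circ f)(\Omega_{E})=g(f(\Omega_{E}))$ for soft functions. This follows parameterwise from the definition of the image of a soft set: for each $e\in E$,
\begin{equation*}
[(g\circ f)(\Omega)](e)=(g\circ f)(\Omega(e))=g(f(\Omega(e)))=g([f(\Omega)](e))=[g(f(\Omega))](e),
\end{equation*}
so the two soft sets agree on every parameter and are therefore equal. Once this identification is in place, the chain $\Omega_{E}\in\zeta_{X}\Rightarrow f(\Omega_{E})\in\zeta_{Y}\Rightarrow g(f(\Omega_{E}))\in\zeta_{Z}$ completes the argument.

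There is no real obstacle here; the proof is essentially symmetric to the SCP composition proposition stated just before Theorem \ref{Theo4.3}, with images replacing preimages. If anything, the only point of mild caution is to avoid confusing the image-of-a-soft-set with a preimage operation, and to remember that SCC is a forward-image property, so chaining $f$ then $g$ is straightforward and no intermediate soft convex hull machinery or upward directed family argument is needed.
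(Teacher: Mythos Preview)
Your argument is correct: the direct chain $\Omega_{E}\in\zeta_{X}\Rightarrow f(\Omega_{E})\in\zeta_{Y}\Rightarrow g(f(\Omega_{E}))=(g\circ f)(\Omega_{E})\in\zeta_{Z}$, together with the parameterwise verification of $(g\circ f)(\Omega_{E})=g(f(\Omega_{E}))$, is exactly what is needed. The paper states this proposition without proof, so your write-up is in fact more detailed than what the paper provides; the intended argument is precisely the one you gave.
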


\section{Soft c-derived operators and soft c-derived spaces}
In this section, we introduce the notions of soft convexly derived operators and soft convexly derived spaces. In addition, we describe the relationship between the above notions and soft convex structures.

\begin{definition}\label{Def4.1}
A mapping $d:\mathcal{S}_{E}(X)\to\mathcal{S}_{E}(X)$ is called a \textbf{soft convexly derived operator} (briefly \textbf{soft c-derived operator}) on $X$, if the following conditions are satisfied:
\begin{enumerate}[\upshape(1)]
\item $d(\Phi_{E})=\Phi_{E}$ (\textbf{Soft normalization law}),
\item  $\Omega_{E}\widetilde{\subseteq} O_{E}$ implies $d(\Omega_{E})\widetilde{\subseteq} d(O_{E})$ (\textbf{Soft monotone law}),
\item $d(d(\Omega_{E})\cup \Omega_{E})\widetilde{\subseteq} d(\Omega_{E})\cup \Omega_{E}$ (\textbf{Soft idempotent law}),
\item If $\{(\Omega_{i},E): i\in I\}\subseteq \mathcal{S}_{E}(X)$ is a soft upward directed family, then\\
$d\left(\widetilde{\bigcup}_{i\in I} (\Omega_{i},E)\right)=\displaystyle\widetilde{\bigcup}_{i\in I} d(\Omega_{i},E)$ (\textbf{Soft upward directed additive law}).
\end{enumerate}
\end{definition}

If $d$ is a soft convexly derived operator on $X$, then the triplet $(X,d,E)$ is called a \textbf{soft convexly derived space} (briefly \textbf{soft c-derived space}).

\begin{proposition}\label{prop 4.3.6}
Let $d$ be a soft c-derived operator on $X$. Then, the family $\zeta_{d}\subseteq\mathcal{S}_{E}(X)$ defined by
\begin{center}
$\zeta_{d}=\{\Omega_{E}\in\mathcal{S}_{E}(X): d(\Omega_{E})\widetilde{\subseteq} \Omega_{E}\}$,
\end{center}
is a soft convex structure on $X$, called the \textbf{soft convex structure induced by $d$}. In addition, $\zeta_{d}=\{d(\Omega_{E})\widetilde{\cup} \Omega_{E} : \Omega_{E}\in \mathcal{S}_{E}(X)\}$.
\end{proposition}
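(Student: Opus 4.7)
The plan is to verify the three axioms of Definition \ref{D3.2} for $\zeta_d$ and then establish the set-theoretic equality as a separate step. The argument splits naturally according to which of the four properties of $d$ is being invoked at each stage, so I will organize it that way.

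First I would check condition (1) of Definition \ref{D3.2}. For $\Phi_E$, the soft normalization law gives $d(\Phi_E)=\Phi_E\widetilde{\subseteq}\Phi_E$. For $X_E$, there is nothing to check beyond the trivial observation that $d(X_E)\widetilde{\subseteq} X_E$ since $X_E(e)=X$ for every $e\in E$. Next, for condition (2), given $\{(\Omega_i,E):i\in I\}\subseteq\zeta_d$, I would set $(\Theta,E)=\widetilde{\bigcap}_{i\in I}(\Omega_i,E)$, use $(\Theta,E)\widetilde{\subseteq}(\Omega_j,E)$ together with the soft monotone law to obtain $d(\Theta,E)\widetilde{\subseteq} d(\Omega_j,E)\widetilde{\subseteq}(\Omega_j,E)$ for every $j$, and then intersect over $j$. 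Condition (3) is the one that really needs the soft upward directed additive law: for a soft upward directed family $\{(\Omega_i,E)\}\subseteq\zeta_d$, that law rewrites $d(\widetilde{\bigcup}_i(\Omega_i,E))$ as $\widetilde{\bigcup}_i d(\Omega_i,E)$, which is contained in $\widetilde{\bigcup}_i(\Omega_i,E)$ because each $d(\Omega_i,E)\widetilde{\subseteq}(\Omega_i,E)$.

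For the additional identity $\zeta_d=\{d(\Omega_E)\widetilde{\cup}\Omega_E:\Omega_E\in\mathcal{S}_E(X)\}$, I would prove the two inclusions. For the $\supseteq$ direction, given any $\Omega_E$, apply the soft idempotent law directly to get $d(d(\Omega_E)\widetilde{\cup}\Omega_E)\widetilde{\subseteq} d(\Omega_E)\widetilde{\cup}\Omega_E$, so $d(\Omega_E)\widetilde{\cup}\Omega_E\in\zeta_d$. For the $\subseteq$ direction, if $\Omega_E\in\zeta_d$, then $d(\Omega_E)\widetilde{\subseteq}\Omega_E$ forces $d(\Omega_E)\widetilde{\cup}\Omega_E=\Omega_E$, exhibiting $\Omega_E$ as a member of the right-hand side (taking the parameter of the representation to be $\Omega_E$ itself).

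None of the steps is really an obstacle; the verification is essentially mechanical once the right axiom is matched to the right closure property. The only subtlety worth flagging in the write-up is that condition (3) is the sole place where the soft upward directed additive law is used, and it is essential there: without it one would only know that the union of the $d(\Omega_i,E)$ lies below the union of the $(\Omega_i,E)$, not that it equals $d$ of that union. I would therefore emphasize this use of axiom (4) of Definition \ref{Def4.1} explicitly, and handle the soft idempotent law likewise in the displayed identity, since those two axioms carry the nontrivial content of the proposition while axioms (1) and (2) cover the routine cases.
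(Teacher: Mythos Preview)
Your proposal is correct and follows essentially the same approach as the paper: verify the three axioms of a soft convex structure using, respectively, normalization and triviality for $\Phi_E$ and $X_E$, the soft monotone law for arbitrary intersections, and the soft upward directed additive law for directed unions; then establish the set equality via the soft idempotent law for one inclusion and the observation $\Omega_E=d(\Omega_E)\widetilde{\cup}\Omega_E$ for the other. The organization and the matching of axioms to closure properties coincide with the paper's argument.
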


\begin{proof}
We will verify that the family $\zeta_{d}$ is a soft convex structure on $X$:\\
(1) Obviously $\Phi_{E}\in\zeta_{d}$, because $\Phi_{E}\in \mathcal{S}_{E}(X)$ and $d(\Phi_{E})= \Phi_{E}$ by the soft normalization law. Since $X_{E}\in\mathcal{S}_{E}(X)$ and $d(X_{E})\widetilde{\subseteq} X_{E}$, also $X_{E}\in \zeta_{d}$.\\
(2) Suppose that $\{(\Omega_{i},E): i\in I\}\subseteq\zeta_{d}$ and let $(\Omega,E)=\displaystyle\widetilde{\bigcap}_{i\in I} (\Omega_{i},E)$. Then, $(\Omega_{i},E)\in \mathcal{S}_{E}(X)$ and $d(\Omega_{i},E)\widetilde{\subseteq} (\Omega_{i},E)$ for all $i\in I$. Observe that $(\Omega,E)\in \mathcal{S}_{E}(X)$ and $(\Omega,E)=\displaystyle\widetilde{\bigcap}_{i\in I} (\Omega_{i},E)\widetilde{\subseteq} (\Omega_{j},E)$ for all $j\in I$. By the soft monotone law, we have $d(\Omega,E)\widetilde{\subseteq} d(\Omega_{j},E)\widetilde{\subseteq} (\Omega_{j},E)$ for all $j\in I$. Thus, $d(\Omega,E)\widetilde{\subseteq} \displaystyle\widetilde{\bigcap}_{i\in I} (\Omega_{i},E)=(\Omega,E)$. Therefore, $(\Omega,E)\in \zeta_{d}$.\\
(3) Suppose that $\{(\Omega_{i},E): i\in I\}\subseteq\zeta_{d}$ is a soft upward directed  family and let $(\Omega,E)=\displaystyle\widetilde{\bigcup}_{i\in I} (\Omega_{i},E)$. Then, $(\Omega_{i},E)\in \mathcal{S}_{E}(X)$ and $d(\Omega_{i},E)\widetilde{\subseteq} (\Omega_{i},E)$ for all $i\in I$. Thus, $(\Omega,E)=\displaystyle\widetilde{\bigcup}_{i\in I} (\Omega_{i},E)\in\mathcal{S}_{E}(X)$ and by the soft upward directed additive law
\begin{eqnarray*}
d(\Omega,E)=d\left(\widetilde{\bigcup}_{i\in I} (\Omega_{i},E)\right)=\displaystyle\widetilde{\bigcup}_{i\in I} d(\Omega_{i},E)\widetilde{\subseteq} \widetilde{\bigcup}_{i\in I} (\Omega_{i},E)=(\Omega,E).
\end{eqnarray*}
Therefore, $(\Omega,E)\in \zeta_{d}$.

Certainly $\zeta_{d}$ is a soft convex structure on $X$. Now, we will show that $\zeta_{d}=\{d(\Omega_{E})\widetilde{\cup} \Omega_{E} : \Omega_{E}\in \mathcal{S}_{E}(X)\}$. For convenience we will denote $\zeta^{\ast}_{d}=\{d(\Omega_{E})\widetilde{\cup} \Omega_{E} : \Omega_{E}\in \mathcal{S}_{E}(X)\}$.
By the soft idempotent law, we have $d(\Omega_{E}\widetilde{\cup} d(\Omega_{E}))\widetilde{\subseteq} \Omega_{E}\widetilde{\cup} d(\Omega_{E})$ for all $\Omega_{E}\in\mathcal{S}_{E}(X)$, which implies that $\zeta^{*}_{d}\subseteq\zeta_{d}$. Also, if $\Omega_{E}\in\zeta_{d}$, then $d(\Omega_{E})\widetilde{\subseteq} \Omega_{E}$ and so, $\Omega_{E}=\Omega_{E}\widetilde{\cup} d(\Omega_{E})\in\zeta^{*}_{d}$. Consequently, $\zeta_{d}=\zeta^{*}_{d}$.	
\end{proof}

\begin{proposition}\label{Prop5.2}
Let $d$ be a soft c-derived operator on $X$, $\zeta_{d}\subseteq\mathcal{S}_{E}(X)$ be the soft convex structure induced by $d$ and $co_{d}$ be the soft convex hull in $\zeta_{d}$. Then, $co_{d}(\Omega_{E})=d(\Omega_{E})\widetilde{\cup} \Omega_{E}$ for all  $\Omega_{E}\in\mathcal{S}_{E}(X)$.	
\end{proposition}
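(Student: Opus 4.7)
The plan is to verify directly that $d(\Omega_E)\widetilde{\cup}\Omega_E$ satisfies the three characterizing properties of the soft convex hull in $\zeta_d$: it lies in $\zeta_d$, it contains $\Omega_E$, and it is contained in every member of $\zeta_d$ containing $\Omega_E$. Once these are established, the uniqueness of the smallest element (noted after Definition \ref{Def4.1}) forces the equality.

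First I would show that $d(\Omega_E)\widetilde{\cup}\Omega_E\in\zeta_d$. This is a direct application of the soft idempotent law, which says exactly that $d(d(\Omega_E)\widetilde{\cup}\Omega_E)\widetilde{\subseteq} d(\Omega_E)\widetilde{\cup}\Omega_E$, i.e., membership in $\zeta_d$ by its definition. Alternatively, this is already recorded as the inclusion $\zeta_d^\ast\subseteq\zeta_d$ in the proof of Proposition~\ref{prop 4.3.6}.

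Next, $\Omega_E\widetilde{\subseteq} d(\Omega_E)\widetilde{\cup}\Omega_E$ is immediate from the definition of the soft union. Together with the previous step, this gives $co_d(\Omega_E)\widetilde{\subseteq} d(\Omega_E)\widetilde{\cup}\Omega_E$, since $co_d(\Omega_E)$ is by Definition~\ref{Def4.1} the intersection of all $\zeta_d$-members containing $\Omega_E$, hence is contained in every such member.

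For the reverse inclusion, I would take any $\Theta_E\in\zeta_d$ with $\Omega_E\widetilde{\subseteq}\Theta_E$; by definition of $\zeta_d$, we have $d(\Theta_E)\widetilde{\subseteq}\Theta_E$, and by the soft monotone law applied to $\Omega_E\widetilde{\subseteq}\Theta_E$ we obtain $d(\Omega_E)\widetilde{\subseteq} d(\Theta_E)\widetilde{\subseteq}\Theta_E$. Combining with $\Omega_E\widetilde{\subseteq}\Theta_E$ gives $d(\Omega_E)\widetilde{\cup}\Omega_E\widetilde{\subseteq}\Theta_E$. Intersecting over all such $\Theta_E$ yields $d(\Omega_E)\widetilde{\cup}\Omega_E\widetilde{\subseteq} co_d(\Omega_E)$, which together with the reverse inclusion completes the proof. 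There is no serious obstacle here; the statement is essentially a corollary of the representation $\zeta_d=\{d(\Omega_E)\widetilde{\cup}\Omega_E:\Omega_E\in\mathcal{S}_E(X)\}$ established in Proposition~\ref{prop 4.3.6} plus the soft monotone law.
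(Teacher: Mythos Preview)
Your proof is correct and follows essentially the same approach as the paper: both arguments verify that $d(\Omega_E)\widetilde{\cup}\Omega_E$ is a $\zeta_d$-member containing $\Omega_E$ (giving $co_d(\Omega_E)\widetilde{\subseteq} d(\Omega_E)\widetilde{\cup}\Omega_E$), and then use the soft monotone law of $d$ to show that $d(\Omega_E)\widetilde{\cup}\Omega_E$ is contained in every $\zeta_d$-member containing $\Omega_E$. The only cosmetic difference is that the paper specializes directly to $\Theta_E=co_d(\Omega_E)$ for the reverse inclusion, whereas you intersect over all admissible $\Theta_E$; these are equivalent.
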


\begin{proof}
Since $\Omega_{E}\widetilde{\subseteq} d(\Omega_{E})\cup \Omega_{E}$ and $d(\Omega_{E})\widetilde{\cup} \Omega_{E}\in\zeta_{d}$, by the soft monotone law of $co_{d}$, we have $co_{d}(\Omega_{E})\widetilde{\subseteq} co_{d}(d(\Omega_{E})\widetilde{\cup} \Omega_{E})=d(\Omega_{E})\widetilde{\cup} \Omega_{E}$. On the other hand, if $O_{E}\in\zeta_{d}$  and $\Omega_{E}\widetilde{\subseteq} O_{E}$, then $d(O_{E})\widetilde{\subseteq} O_{E}$ and $d(\Omega_{E})\widetilde{\subseteq} d(O_{E})$, which implies that $d(\Omega_{E})\widetilde{\cup} \Omega_{E} \widetilde{\subseteq} d(O_{E})\widetilde{\cup} O_{E}=O_{E}$. Now, putting $O_{E}=co_{d}(\Omega_{E})$, we get that $d(\Omega_{E})\widetilde{\cup} \Omega_{E}\widetilde{\subseteq} co_{d}(\Omega_{E})$. Therefore, $co_{d}(\Omega_{E})=d(\Omega_{E})\widetilde{\cup} \Omega_{E}$.
\end{proof}

\begin{definition}
Let $(X,d_{X},E)$ and $(Y,d_{Y},E)$ be two soft c-derived spaces. A soft function $f:X\to Y$  is called  \textbf{soft c-derived preserving} (briefly \textbf{SDP}), if
\begin{center}
$f(d_{X}(\Omega_{E}))\widetilde{\subseteq} f(\Omega_{E})\widetilde{\cup} d_{Y}(f(\Omega_{E}))$ for all $\Omega_{E}\in\mathcal{S}_{E}(X)$.
\end{center}
\end{definition}

We will use the notation $f:(X,d_{X},E)\to (Y,d_{Y},E)$ to represent a soft function between two soft c-derived spaces $(X,d_{X},E)$ and $(Y,d_{Y},E)$.

\begin{proposition}
Let $f: (X,d_{X},E)\to(Y,d_{Y},E)$ and $g:(Y,d_{Y},E) \to(Z,d_{Z},E)$ be two soft functions between soft c-derived spaces. If $f$ and $g$ are SDP, then  $g \circ f$  is also SDP.
\end{proposition}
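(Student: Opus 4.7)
The plan is to start from the SDP inequality for $f$, push it forward under $g$, and then apply SDP for $g$ to the intermediate soft set $f(\Omega_E)$. The two auxiliary ingredients needed are routine properties of the soft image operator: monotonicity (if $\Omega_E \widetilde{\subseteq} O_E$ then $g(\Omega_E)\widetilde{\subseteq} g(O_E)$) and distribution over soft unions ($g(\Omega_E \widetilde{\cup} O_E) = g(\Omega_E) \widetilde{\cup} g(O_E)$), both of which follow immediately from the pointwise definition of $g(\Omega,E)$ given in the Preliminaries.

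Concretely, fix an arbitrary $\Omega_E \in \mathcal{S}_E(X)$. First I would invoke SDP for $f$ to get
\[
f(d_X(\Omega_E)) \,\widetilde{\subseteq}\, f(\Omega_E)\,\widetilde{\cup}\, d_Y(f(\Omega_E)).
\]
Applying $g$ to both sides and using the two properties of images just mentioned yields
\[
g(f(d_X(\Omega_E))) \,\widetilde{\subseteq}\, g(f(\Omega_E))\,\widetilde{\cup}\, g(d_Y(f(\Omega_E))).
\]
Now I would apply SDP for $g$ to the soft set $f(\Omega_E) \in \mathcal{S}_E(Y)$, obtaining
\[
g(d_Y(f(\Omega_E))) \,\widetilde{\subseteq}\, g(f(\Omega_E))\,\widetilde{\cup}\, d_Z(g(f(\Omega_E))).
\]

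Substituting this into the previous soft inclusion and using idempotency of the soft union ($A_E \widetilde{\cup} A_E = A_E$) collapses the right-hand side to $g(f(\Omega_E))\,\widetilde{\cup}\, d_Z(g(f(\Omega_E)))$. Rewriting $g \circ f$ on both sides gives exactly
\[
(g\circ f)(d_X(\Omega_E)) \,\widetilde{\subseteq}\, (g\circ f)(\Omega_E)\,\widetilde{\cup}\, d_Z((g\circ f)(\Omega_E)),
\]
which is the SDP condition for $g\circ f$. There is no real obstacle here; the argument is essentially a diagram chase and the only subtle point is observing that, since SDP is stated for every soft set in the domain, it may legitimately be applied to $f(\Omega_E)$ in the middle step.
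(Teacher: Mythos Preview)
Your proof is correct and follows essentially the same approach as the paper: apply SDP for $f$, push through $g$ using monotonicity and union-distributivity of the image operator, then apply SDP for $g$ to $f(\Omega_E)$ and combine. The only cosmetic difference is that you make explicit the image-operator properties and the idempotency of $\widetilde{\cup}$, which the paper uses silently.
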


\begin{proof}
Let $\Omega_{E}\in\mathcal{S}_{E}(X)$. Since $f: (X,d_{X},E)\to(Y,d_{Y},E)$ is SDP, we have
\begin{center}
$f(d_{X}(\Omega_{E}))\widetilde{\subseteq} f(\Omega_{E})\widetilde{\cup} d_{Y}(f(\Omega_{E}))$
\end{center}
and so,
\begin{eqnarray*}
(g\circ f)(d_{X}(\Omega_{E}))&=& g(f(d_{X}(\Omega_{E})))\\
&\widetilde{\subseteq}& g(f(\Omega_{E}) \widetilde{\cup} d_{Y}(f(\Omega_{E})))\\
&=& g(f(\Omega_{E}))\widetilde{\cup} g(d_{Y}(f(\Omega_{E})))\\
&=& (g\circ f) (\Omega_{E})\widetilde{\cup} g(d_{Y}(f(\Omega_{E}))).
\end{eqnarray*}
Since $g:(Y,d_{Y},E) \to(Z,d_{Z},E)$ is SDP and $f(\Omega_{E})\in \mathcal{S}_{E}(Y)$, we have
\begin{center}
$g(d_{Y}(f(\Omega_{E}))\widetilde{\subseteq} g(f(\Omega_{E}))\widetilde{\cup} d_{Z}(g(f(\Omega_{E})))= (g\circ f) (\Omega_{E}) \widetilde{\cup} d_{Z}((g \circ f)(\Omega_{E}))$,
\end{center}
which implies that
\begin{eqnarray*}
(g\circ f)(d_{X}(\Omega_{E}))&\widetilde{\subseteq}& (g\circ f) (\Omega_{E})\widetilde{\cup} g(d_{Y}(f(\Omega_{E})))\\
&\widetilde{\subseteq}& (g\circ f) (\Omega_{E}) \widetilde{\cup} d_{Z}((g \circ f)(\Omega_{E})).
\end{eqnarray*}
This shows that $g \circ f$ is SDP.
\end{proof}

\begin{proposition}\label{prop 4.3.8}
Let $(X,d_{X},E)$ and $(Y,d_{Y},E)$ be two c-derived spaces. If $f:(X,d_{X},E)\to(Y,d_{Y},E)$ is SDP, then $f:(X,\zeta_{d_{X}},E)\to (Y,\zeta_{d_{Y}},E)$ is SCP.
\end{proposition}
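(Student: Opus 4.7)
The plan is to unpack both definitions directly. Fix $O_{E}\in\zeta_{d_{Y}}$, so that by definition $d_{Y}(O_{E})\widetilde{\subseteq} O_{E}$, and set $\Omega_{E}=f^{-1}(O_{E})\in\mathcal{S}_{E}(X)$. To conclude SCP I need to show $f^{-1}(O_{E})\in\zeta_{d_{X}}$, i.e., $d_{X}(f^{-1}(O_{E}))\widetilde{\subseteq} f^{-1}(O_{E})$.

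First I apply the SDP condition to $\Omega_{E}=f^{-1}(O_{E})$ to get
\[
f(d_{X}(f^{-1}(O_{E})))\;\widetilde{\subseteq}\; f(f^{-1}(O_{E}))\;\widetilde{\cup}\; d_{Y}(f(f^{-1}(O_{E}))).
\]
Next I use the standard soft-function fact $f(f^{-1}(O_{E}))\widetilde{\subseteq} O_{E}$ together with the soft monotone law of $d_{Y}$ to obtain $d_{Y}(f(f^{-1}(O_{E})))\widetilde{\subseteq} d_{Y}(O_{E})\widetilde{\subseteq} O_{E}$, where the last inclusion uses $O_{E}\in\zeta_{d_{Y}}$. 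Combining these gives $f(d_{X}(f^{-1}(O_{E})))\widetilde{\subseteq} O_{E}$.

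Finally, I take preimages and use $Z_{E}\widetilde{\subseteq} f^{-1}(f(Z_{E}))$ for any soft set $Z_{E}$ on $X$, applied to $Z_{E}=d_{X}(f^{-1}(O_{E}))$, to conclude
\[
d_{X}(f^{-1}(O_{E}))\;\widetilde{\subseteq}\; f^{-1}(f(d_{X}(f^{-1}(O_{E}))))\;\widetilde{\subseteq}\; f^{-1}(O_{E}),
\]
which is exactly the condition $f^{-1}(O_{E})\in\zeta_{d_{X}}$. Since $O_{E}\in\zeta_{d_{Y}}$ was arbitrary, $f:(X,\zeta_{d_{X}},E)\to(Y,\zeta_{d_{Y}},E)$ is SCP.

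There is no real obstacle: the proof is just a short chain of inclusions whose only ingredients are the SDP inequality, the soft monotone law of the induced c-derived operator, the characterization $\zeta_{d}=\{\Omega_{E}:d(\Omega_{E})\widetilde{\subseteq}\Omega_{E}\}$ from Proposition \ref{prop 4.3.6}, and the elementary soft-function identities $f(f^{-1}(O_{E}))\widetilde{\subseteq} O_{E}$ and $Z_{E}\widetilde{\subseteq} f^{-1}(f(Z_{E}))$. The only point requiring any care is to remember to apply monotonicity \emph{before} invoking the assumption $O_{E}\in\zeta_{d_{Y}}$, so that one properly bounds $d_{Y}(f(f^{-1}(O_{E})))$ rather than $d_{Y}(O_{E})$ directly.
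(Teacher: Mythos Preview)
Your proof is correct, but it takes a genuinely different route from the paper's. The paper argues via the soft convex hull: using Proposition~\ref{Prop5.2} it writes $co_{d_{X}}(\Omega_{E})=d_{X}(\Omega_{E})\widetilde{\cup}\Omega_{E}$ and $co_{d_{Y}}(f(\Omega_{E}))=d_{Y}(f(\Omega_{E}))\widetilde{\cup} f(\Omega_{E})$, then shows $f(co_{d_{X}}(\Omega_{E}))\widetilde{\subseteq} co_{d_{Y}}(f(\Omega_{E}))$ directly from the SDP inequality, and finally invokes the equivalence $(1)\Leftrightarrow(3)$ of Theorem~\ref{teorema 4.2} to conclude SCP. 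You instead verify the definition of SCP head-on: pick $O_{E}\in\zeta_{d_{Y}}$ and show $f^{-1}(O_{E})\in\zeta_{d_{X}}$ using only the characterization $\zeta_{d}=\{\Omega_{E}:d(\Omega_{E})\widetilde{\subseteq}\Omega_{E}\}$, monotonicity of $d_{Y}$, and the elementary inclusions $f(f^{-1}(O_{E}))\widetilde{\subseteq} O_{E}$ and $Z_{E}\widetilde{\subseteq} f^{-1}(f(Z_{E}))$. Your approach is more self-contained (it needs neither Proposition~\ref{Prop5.2} nor Theorem~\ref{teorema 4.2}), while the paper's approach highlights how the result fits into the hull-operator framework already built.
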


\begin{proof}
Assume that $f:(X,d_{X},E)\to(Y,d_{Y},E)$ is SDP and let $\Omega_{E}\in\mathcal{S}_{E}(X)$. Then,  $f(d_{X}(\Omega_{E}))\widetilde{\subseteq} f(\Omega_{E})\widetilde{\cup} d_{Y}(f(\Omega_{E}))$ and by Proposition \ref{Prop5.2}, we have $co_{d_{X}}(\Omega_{E})=d_{X}(\Omega_{E})\widetilde{\cup} \Omega_{E}$ and $co_{d_{Y}}(f(\Omega_{E}))=d_{Y}(f(\Omega_{E}))\widetilde{\cup} f(\Omega_{E})$. Thus,
$f(co_{d_{X}}(\Omega_{E}))= f(d_{X}(\Omega_{E})\widetilde{\cup} \Omega_{E})=f(d_{X}(\Omega_{E}))\widetilde{\cup}\break f(\Omega_{E})\widetilde{\subseteq} f(\Omega_{E})\widetilde{\cup} d_{Y}(f(\Omega_{E}))=co_{d_{Y}}(f(\Omega_{E}))$ and by Theorem \ref{teorema 4.2}, we get that $f:(X,\zeta_{d_{X}},E)\to (Y,\zeta_{d_{Y}},E)$ is SCP.
\end{proof}

\section{Soft convex base spaces}
In this section, we present the notions of soft convex bases and soft convex base spaces. Also, we establish the relationship between these notions and soft convex structures.
\begin{definition}
A family $\beta\subseteq\mathcal{S}_{E}(X)$ is a \textbf{soft convex base} (briefly \textbf{soft c-base}) on $X$, if the following conditions are satisfied:
\begin{enumerate}[\upshape(SCB1)]
\item $X_{E}=\displaystyle\widetilde{\bigcup}_{i \in I}\left(\Omega_{i}, E\right)$ for some soft upward directed family $\{(\Omega_{i},E): i\in I\}\subseteq\beta$.
\item For each family $\{(\Omega_{i},E): i\in I\}\subseteq\beta$, there exists a soft upward directed family $\{(O_{j},E): j\in J\}\subseteq\beta$ such that $\displaystyle\widetilde{\bigcap}_{i \in I}\left(\Omega_{i}, E\right)=\displaystyle\widetilde{\bigcup}_{j \in J}\left(O_{j}, E\right)$.
\item If $\{(\Omega_{i},E): i\in I\}\subseteq \mathcal{S}_{E}(X)$ is a soft upward directed family and $\{(O_{ij},E): j\in J_{i}\}\subseteq \beta$ is a soft upward directed family such that $(\Omega_{i},E)=\displaystyle\widetilde{\bigcup}_{j \in J_{i}}\left(O_{ij}, E\right)$ for all $i\in I$, then there exists a soft upward directed family $\{(O_{k},E): k\in K\}\subseteq\beta$ such that $\displaystyle\widetilde{\bigcup}_{i \in I}\left(\Omega_{i}, E\right)=\displaystyle\widetilde{\bigcup}_{k \in K}\left(O_{k}, E\right)$.
\end{enumerate}
\end{definition}

If $\beta$ is a soft c-base on $X$, then the triplet $(X,\beta,E)$ is called a \textbf{soft convex base space} (briefly \textbf{soft c-base space}).

In the following theorem, we will represent by $\mathfrak{U}(\beta)$ the set of all soft upward directed families contained in a soft c-base $\beta$ on $X$.
\begin{theorem}
Let $\beta$ be a soft c-base on $X$. Then, the family $\zeta_{\beta}\subseteq\mathcal{S}_{E}(X)$ defined by
\begin{center}
$\zeta_{\beta}=\left\{\Omega_{E}\in\mathcal{S}_{E}(X): (\Omega,E)=\displaystyle\widetilde{\bigcup}_{i \in I}\left(\Omega_{i}, E\right) \mbox{ for some family } \{(\Omega_{i},E): i\in I\}\in\mathfrak{U}(\beta)\right\}$,
\end{center}
is a soft convex structure on $X$, called the \textbf{soft convex structure generated by $\beta$}.
\end{theorem}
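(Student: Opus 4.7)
The plan is to check the three axioms of Definition \ref{D3.2} for $\zeta_{\beta}$ in turn. Condition (1) is essentially free: $X_{E}\in\zeta_{\beta}$ is exactly the content of (SCB1), while $\Phi_{E}$ arises by taking $I=\emptyset$ in the defining clause of $\zeta_{\beta}$, since the empty family is vacuously soft upward directed and its soft union is $\Phi_{E}$.

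Condition (3) of Definition \ref{D3.2} (closure under soft upward directed unions) also reduces quickly to (SCB3). Given a soft upward directed family $\{(\Omega_{i},E): i\in I\}\subseteq\zeta_{\beta}$, I would write each $(\Omega_{i},E)=\widetilde{\bigcup}_{j\in J_{i}}(O_{ij},E)$ with $\{(O_{ij},E): j\in J_{i}\}\in\mathfrak{U}(\beta)$; then (SCB3) hands back a soft upward directed subfamily of $\beta$ whose soft union equals $\widetilde{\bigcup}_{i\in I}(\Omega_{i},E)$, placing that union in $\zeta_{\beta}$.

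The real work is condition (2), closure under arbitrary intersections. Given any $\{(\Omega_{i},E): i\in I\}\subseteq\zeta_{\beta}$, write again $(\Omega_{i},E)=\widetilde{\bigcup}_{j\in J_{i}}(O_{ij},E)$ with each inner family in $\mathfrak{U}(\beta)$. Since soft unions and intersections are computed parameter-wise, the classical distributive law yields
\[
\widetilde{\bigcap}_{i\in I}(\Omega_{i},E)=\widetilde{\bigcup}_{\phi\in\Phi}A_{\phi},\qquad A_{\phi}:=\widetilde{\bigcap}_{i\in I}(O_{i,\phi(i)},E),
\]
where $\Phi=\prod_{i\in I}J_{i}$. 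For each fixed $\phi$, axiom (SCB2) applied to $\{(O_{i,\phi(i)},E): i\in I\}\subseteq\beta$ presents $A_{\phi}$ as a soft union of a soft upward directed subfamily of $\beta$, so $A_{\phi}\in\zeta_{\beta}$. I would then check that $\{A_{\phi}: \phi\in\Phi\}$ is itself soft upward directed: given $\phi_{1},\phi_{2}\in\Phi$, upward-directedness of each $\{(O_{ij},E): j\in J_{i}\}$ lets me pick $\phi_{3}(i)\in J_{i}$ with $(O_{i,\phi_{1}(i)},E),(O_{i,\phi_{2}(i)},E)\widetilde{\subseteq}(O_{i,\phi_{3}(i)},E)$, and passing to soft intersections gives $A_{\phi_{1}},A_{\phi_{2}}\widetilde{\subseteq}A_{\phi_{3}}$. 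Finally, (SCB3) applied to $\{A_{\phi}\}_{\phi\in\Phi}$ together with the $\beta$-representations furnished by (SCB2) collapses the whole union into a single soft upward directed union from $\beta$, so $\widetilde{\bigcap}_{i\in I}(\Omega_{i},E)\in\zeta_{\beta}$.

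The most delicate step is this intersection axiom: the argument only succeeds because the distributive rewrite produces a family whose index set $\Phi$ inherits enough directedness from each $J_{i}$ to feed first (SCB2) and then (SCB3). Degenerate situations (some $J_{i}=\emptyset$, forcing $(\Omega_{i},E)=\Phi_{E}$ and the whole intersection to collapse to $\Phi_{E}$, or $I=\emptyset$ making the intersection $X_{E}$) should be dispatched using condition (1) before running the general argument.
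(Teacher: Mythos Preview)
Your proposal is correct and follows essentially the same route as the paper's own proof: axiom (1) via (SCB1) and the empty indexing family, axiom (3) directly from (SCB3), and axiom (2) by the distributive rewrite over $\Phi=\prod_{i\in I}J_{i}$, (SCB2) on each $A_{\phi}$, the check that $\{A_{\phi}\}_{\phi\in\Phi}$ is soft upward directed via coordinatewise choices, and a final application of (SCB3). Your explicit handling of the degenerate cases $I=\emptyset$ and some $J_{i}=\emptyset$ is a small addition the paper leaves implicit.
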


\begin{proof}
We will verify that the family $\zeta_{\beta}$ is a soft convex structure on $X$:\\
(1) Clearly $X_{E}\in \zeta_{\beta}$ by (SCB1). If $\Phi_{E}\notin \zeta_{\beta}$, then $\Phi_{E}\neq \displaystyle\widetilde{\bigcup}_{i \in I}\left(\Omega_{i}, E\right)$ for all soft upward directed family $\{(\Omega_{i},E): i\in I\}\in\mathfrak{U}(\beta)$. In particular, as the empty family   $\{(\Omega_{i},E): i\in \emptyset\}$ of soft sets on $X$ is soft upward directed and is contained in $\beta$, we have $\Phi_{E}\neq \displaystyle\widetilde{\bigcup}_{i \in \emptyset}\left(\Omega_{i}, E\right)$. But this is a contradiction, because in this case $\displaystyle\bigcup_{i \in \emptyset} \Omega_{i}(e)=\emptyset$ for all $e\in E$, which means that $\displaystyle\widetilde{\bigcup}_{i \in \emptyset}\left(\Omega_{i}, E\right)=\Phi_{E}$. Since the assumption that $\Phi_{E}\notin \zeta_{\beta}$ leads us to a contradiction, we conclude that $\Phi_{E}\in \zeta_{\beta}$.\\
(2) Suppose that $\{(O_{i},E): i\in I\}\subseteq\zeta_{\beta}$. Then, $(O_{i},E)\in \mathcal{S}_{E}(X)$ and there exists a soft upward directed family $\{(\Omega_{ij},E): j\in J_{i}\}\subseteq\beta$ such that $(O_{i},E)=\displaystyle\widetilde{\bigcup}_{j\in J_{i}}\left(\Omega_{ij}, E\right)$ for all $i\in I$. Thus,
\begin{center}
$\displaystyle\widetilde{\bigcap}_{i\in I} (O_{i},E)=\widetilde{\bigcap}_{i\in I}\widetilde{ \bigcup}_{j\in J_{i}} (\Omega_{ij},E)=\widetilde{\bigcup}_{f\in\prod_{i\in I} J_{i}} \widetilde{\bigcap}_{i\in I} (\Omega_{if(i)},E)$.
\end{center}
By (SCB2), it follows that for all $f\in\prod_{i\in I}J_{i}$, there exists a soft upward directed family $\{(\Omega_{ik},E): k\in K_{i}\}\subseteq\beta$ such that $\displaystyle\widetilde{\bigcap}_{i\in I} (\Omega_{if(i)},E)=\displaystyle\widetilde{\bigcup}_{k\in K_{i}} (\Omega_{ik},E)$. We affirm that
 $\left\{\displaystyle\widetilde{\bigcap}_{i\in I} (\Omega_{i f(i)},E): f\in \prod_{i\in I} J_i\right\}$ is a soft upward directed family of soft sets on $X$. Indeed, if $f, g \in \displaystyle\prod_{i\in I} J_i$, then $f(i)$ and $g(i)$ belong to $J_{i}$, $(\Omega_{i f(i)},E)$ and $(\Omega_{i g(i)},E)$ belong to  $\{(\Omega_{ij},E): j\in J_{i}\}$ for all $i\in I$. As $\{(\Omega_{ij},E): j\in J_{i}\}$ is a soft upward directed family, there exists $(\Omega_{ij_{i}},E)\in \{(\Omega_{ij},E): j\in J_{i}\}$  such that $(\Omega_{i f(i)},E)\widetilde{\subseteq} (\Omega_{ij_{i}},E)$ and $(\Omega_{i g(i)},E)\widetilde{\subseteq} (\Omega_{ij_{i}},E)$. Let us define the function $h:I\to\displaystyle\bigcup_{i\in I} J_{i}$ by $h(i)=ij_{i}$ for all $i\in I$. Thus, $h\in \displaystyle\prod_{i\in I} J_{i}$, $\displaystyle\widetilde{\bigcup}_{i\in I} (\Omega_{i f(i)},E)\widetilde{\subseteq} \widetilde{\bigcup}_{i\in I} (\Omega_{i h(i)},E)$ and  $\displaystyle\widetilde{\bigcup}_{i\in I} (\Omega_{i g(i)},E)\widetilde{\subseteq} \widetilde{\bigcup}_{i\in I} (\Omega_{i h(i)},E)$. This shows that what we had affirmed is true. Now, by (SCB3), there exists a soft upward directed family $\beta^{*}\subseteq\beta$ such that
\begin{center}
$\displaystyle\widetilde{\bigcap}_{i\in I} (O_{i},E)=\widetilde{\bigcup}_{f\in\prod_{i\in I} J_{i}} \widetilde{\bigcap}_{i\in I} (\Omega_{if(i)},E)=\widetilde{\bigcup}_{(\Theta,E)\in\beta^{*}} (\Theta,E)$,
\end{center}
which implies that $\displaystyle\widetilde{\bigcap}_{i\in I} (O_{i},E)\in \zeta_{\beta}$.\\
(3) Suppose that $\{(O_{i},E): i\in I\}\subseteq\zeta_{\beta}$ is a soft upward directed  family. Then, for all $i\in I$, $(O_{i},E)\in \mathcal{S}_{E}(X)$ and there exists a soft upward directed family $\{(\Omega_{ij},E): j\in J_{i}\}\subseteq\beta$ such that $(O_{i},E)=\displaystyle\widetilde{\bigcup}_{j\in J_{i}}\left(\Omega_{ij}, E\right)$. Thus,  by (SCB3), there exists a soft upward directed family $\{(\Omega_{k},E): k\in K\}\subseteq\beta$ such that
\begin{center}
$\displaystyle\widetilde{\bigcup}_{i\in I} (O_{i},E)=\widetilde{\bigcup}_{k\in K} (\Omega_{k},E)$.
\end{center}
Therefore, $\displaystyle\widetilde{\bigcup}_{i\in I} (O_{i},E)\in \zeta_{\beta}$.

From the above, we conclude that  $\zeta_{\beta}$ is a soft convex structure on $X$.
\end{proof}

\begin{definition}
Let $(X,\beta_{X},E)$ and $(Y,\beta_{Y},E)$ be two soft c-base spaces. A soft function $f:X\to Y$  is called  \textbf{soft c-base preserving} (briefly \textbf{SBP}), if $\Omega_{E}\in\beta_{Y}$ implies $f^{-1}(\Omega_{E})\in\beta_{X}$.
\end{definition}

We will use the notation $f:(X,\beta_{X},E)\to (Y,\beta_{Y},E)$ to represent a soft function between two soft c-base spaces $(X,\beta_{X},E)$ and $(Y,\beta_{Y},E)$.

\begin{proposition}
Let $f: (X,\beta_{X},E)\to(Y,\beta_{Y},E)$ and $g:(Y,\beta_{Y},E) \to(Z,\beta_{Z},E)$ be two soft functions between soft c-base spaces. If $f$ and $g$ are SBP, then  $g \circ f$  is also SBP.
\end{proposition}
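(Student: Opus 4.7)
The plan is to chase preimages through the two hypotheses in sequence, just as one does in point‑set topology for continuity under composition. Since SBP is a strictly pullback‑type condition ($\Omega_E\in\beta_Y \Rightarrow f^{-1}(\Omega_E)\in\beta_X$), the argument should be essentially one line once the identity $(g\circ f)^{-1}(\Omega_E)=f^{-1}(g^{-1}(\Omega_E))$ is in hand at the level of soft sets.

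First I would fix an arbitrary $\Omega_E\in\beta_Z$. Applying the SBP property of $g:(Y,\beta_Y,E)\to(Z,\beta_Z,E)$, we immediately get $g^{-1}(\Omega_E)\in\beta_Y$. Applying the SBP property of $f:(X,\beta_X,E)\to(Y,\beta_Y,E)$ to this member of $\beta_Y$, we obtain $f^{-1}(g^{-1}(\Omega_E))\in\beta_X$. It then remains to identify $(g\circ f)^{-1}(\Omega_E)$ with $f^{-1}(g^{-1}(\Omega_E))$.

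For that identification I would argue parameter‑wise using the definition of soft preimage: for each $e\in E$,
\begin{equation*}
[(g\circ f)^{-1}(\Omega)](e)=(g\circ f)^{-1}(\Omega(e))=f^{-1}(g^{-1}(\Omega(e)))=[f^{-1}(g^{-1}(\Omega))](e),
\end{equation*}
where the middle equality is the classical set‑theoretic identity for preimages under composition. Consequently $(g\circ f)^{-1}(\Omega_E)=f^{-1}(g^{-1}(\Omega_E))\in\beta_X$, which is precisely the SBP condition for $g\circ f$.

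I do not expect any real obstacle here; the only point that deserves explicit mention is the soft‑preimage identity $(g\circ f)^{-1}=f^{-1}\circ g^{-1}$, which follows directly from the parameter‑wise definition of a soft function's preimage given in the preliminaries.
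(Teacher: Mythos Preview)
Your argument is correct. The paper actually states this proposition without proof (treating it as routine, just as it does for the analogous SCP and SCC composition results), so there is no detailed approach to compare against; your one-line preimage-chasing argument, together with the parameter-wise verification of $(g\circ f)^{-1}(\Omega_E)=f^{-1}(g^{-1}(\Omega_E))$, is exactly the expected justification.
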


\begin{proposition}\label{prop 4.3.8}
Let $(X,\beta_{X},E)$ and $(Y,\beta_{Y},E)$ be two soft c-base spaces. If $f:(X,\beta_{X},E)\to(Y,\beta_{Y},E)$ is SBP, then $f:(X,\zeta_{\beta_{X}},E)\to (Y,\zeta_{\beta_{Y}},E)$ is SCP.
\end{proposition}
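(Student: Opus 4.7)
The plan is to unpack the definition of $\zeta_{\beta_Y}$, apply SBP to the base elements supplied by that definition, and then verify that taking preimages preserves the soft upward directed structure. Concretely, I would start by fixing an arbitrary $O_E\in\zeta_{\beta_Y}$. By the definition of the soft convex structure generated by a soft c-base, there exists a soft upward directed family $\{(O_i,E):i\in I\}\in\mathfrak{U}(\beta_Y)$ such that $O_E=\widetilde{\bigcup}_{i\in I}(O_i,E)$. My goal is then to rewrite $f^{-1}(O_E)$ as a soft union of a soft upward directed family drawn from $\beta_X$.

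Next I would use the standard fact that soft preimages distribute over arbitrary soft unions, giving
\[
f^{-1}(O_E)=f^{-1}\!\left(\widetilde{\bigcup}_{i\in I}(O_i,E)\right)=\widetilde{\bigcup}_{i\in I}f^{-1}(O_i,E).
\]
Because $f:(X,\beta_X,E)\to(Y,\beta_Y,E)$ is SBP and each $(O_i,E)\in\beta_Y$, every $f^{-1}(O_i,E)$ lies in $\beta_X$. Thus $\{f^{-1}(O_i,E):i\in I\}\subseteq\beta_X$, and it only remains to confirm that this family is soft upward directed.

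The last step is the small technical point that could be considered the main (though modest) obstacle: verifying preservation of the directed structure. Given $i_1,i_2\in I$, by assumption there exists $i_3\in I$ with $(O_{i_1},E)\widetilde{\subseteq}(O_{i_3},E)$ and $(O_{i_2},E)\widetilde{\subseteq}(O_{i_3},E)$. Since soft preimage is monotone with respect to $\widetilde{\subseteq}$ (which is immediate parameter-wise from the set-theoretic fact that $A\subseteq B$ implies $f^{-1}(A)\subseteq f^{-1}(B)$), we obtain $f^{-1}(O_{i_1},E)\widetilde{\subseteq}f^{-1}(O_{i_3},E)$ and $f^{-1}(O_{i_2},E)\widetilde{\subseteq}f^{-1}(O_{i_3},E)$. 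Hence $\{f^{-1}(O_i,E):i\in I\}\in\mathfrak{U}(\beta_X)$.

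Putting everything together, $f^{-1}(O_E)$ is the soft union of a soft upward directed subfamily of $\beta_X$, so $f^{-1}(O_E)\in\zeta_{\beta_X}$ by the definition of the soft convex structure generated by $\beta_X$. Since $O_E\in\zeta_{\beta_Y}$ was arbitrary, $f:(X,\zeta_{\beta_X},E)\to(Y,\zeta_{\beta_Y},E)$ is SCP, as required.
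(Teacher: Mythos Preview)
Your proof is correct and follows essentially the same approach as the paper: pick a member of $\zeta_{\beta_Y}$, write it as the soft union of a soft upward directed family in $\beta_Y$, push through $f^{-1}$ using SBP and distributivity of preimage over unions, and conclude membership in $\zeta_{\beta_X}$. In fact you supply more detail than the paper, which simply asserts that $\{f^{-1}(\Omega_i,E):i\in I\}$ is soft upward directed without the explicit monotonicity argument you give.
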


\begin{proof}
Let $(\Omega,E)\in \zeta_{\beta_{Y}}$. Then, there exists a soft upward directed family $\left\{(\Omega_{i},E): i\in I\right\}\subseteq \beta_{Y}$ such that $(\Omega,E)=\displaystyle\widetilde{\bigcup}_{i \in I} (\Omega_{i},E)$. Thus, $\left\{f^{-1}(\Omega_{i},E): i\in I\right\}\subseteq \beta_{X}$ is a soft upward directed family and $f^{-1}(\Omega,E)=f^{-1}\left(\displaystyle\widetilde{\bigcup}_{i \in I} (\Omega_{i},E)\right)=\displaystyle\widetilde{\bigcup}_{i \in I} f^{-1}(\Omega_{i},E)$, which implies that $f^{-1}(\Omega,E)\in \zeta_{\beta_{X}}$. Therefore, $f:(X,\zeta_{\beta_{X}},E)\to (Y,\zeta_{\beta_{Y}},E)$ is SCP.
\end{proof}

\end{document}